\newtheorem{defn}{Definition}[section]
\newtheorem{prop}{Proposition}[section]
\newtheorem{cor}{Corollary}[section]
\newtheorem{remark}{Remark}[section]
\newtheorem{example}{Example}[section]
\newcommand{\pto}{}
\newcommand{\pgets}{}
\DeclareRobustCommand{\pto}{\mathrel{\mathpalette\p@to@gets\to}}
\DeclareRobustCommand{\pgets}{\mathrel{\mathpalette\p@to@gets\gets}}
\newcommand{\p@to@gets}[2]{%
  \ooalign{\hidewidth$\m@th#1\mapstochar\mkern5mu$\hidewidth\cr$\m@th#1\to$\cr}%
}
\newcommand{\xmrightarrow}[1]{\overset{#1}{\pgets}}
\begin{document}

\title{Cauchy completeness and causal spaces}
\author{Branko Nikoli\'c*}
\address{Mathematics department, Macquarie University, Sydney, Australia}
\email{branko.nikolic@mq.edu.au}
\thanks{*I gratefully acknowledge the support of an International Macquarie University Research Scholarship.}

\date{\today}

\subjclass[2010]{18D20,16D90}

\keywords{Cauchy completeness, proper time, causal spaces}

\begin{abstract}
	Following Lawvere's description of metric spaces using enriched category theory, we introduce a change in the base of enrichment that allows description of some aspects of (relativistic) causal spaces. All such spaces are Cauchy complete, in the sense of enriched category theory. Furthermore, we give sufficient conditions on a base monoidal category for which enriched categories are Cauchy complete if and only if their underlying categories are (their idempotent arrows split).
\end{abstract}

\maketitle

\tableofcontents

\section{Introduction}

A generalized metric space $X$ consists of a set of points and, for each pair of points $P$ and $Q$, a distance $d(P,Q)\in[0,\infty]$ from $P$ to $Q$ such that, for all points $P$, $Q$ and $R$,
\begin{align} \label{ineq:0}
d(P,P)&=0\\
d(P,Q)+d(Q,R)&\geq d(P,R)\,. \label{ineq:triangle}
\end{align}
``Generalized'' comes from dropping conditions of finiteness (allowing infinite distance), symmetry (allowing $d(P,Q)\neq d(Q,P)$), and distinguishability (allowing $d(P,Q)=0$ without $P=Q$). Those spaces correspond precisely \cite{Lawvere1973} to categories enriched in $\mathcal{R}$ - a monoidal category (more concretely, a totally ordered set) with positive reals and infinity as objects, an arrow between $a$ and $b$ if and only if $b\leq a$, and monoidal structure given by sum. $\mathcal{R}$ is also closed, with internal hom given by truncated subtraction, uniquely defined right adjoint to summation. To see the correspondence, recall \cite{Kelly1982} that a category $\mathcal{X}$ enriched in a monoidal category $\mathcal{V}$ consists of a set of objects (points in this case), for each pair of objects a hom, that is, an object in $\mathcal{V}$ (a number providing distance in this case), and unit and composition arrows of $\mathcal{V}$ (providing (in)equalities (\ref{ineq:0})-(\ref{ineq:triangle}), in this case) satisfying unit and associativity laws (trivially true in this case because $\mathcal{R}$ is a poset).

Denote by $\mathcal{I}$ the space having only one point $*$. An enriched module (aka profunctor, distributor) $\mathcal{I}\xmrightarrow{M}\mathcal{X}$, alternatively expressed as an enriched presheaf $M:\mathcal{X}^\mathrm{op}\rightarrow \mathcal{R}$, assigns to each point $P$ in $\mathcal{X}$ a distance from $P$ to $*$, $M(P,*)$, with an action ensuring triangle inequality for the newly introduced distances
\begin{equation}\label{eq:metM}
\mathcal{X}(P,Q)+M(Q,*)\geq M(P,*)\,.
\end{equation}
For example, each point $P\in \mathcal{X}$ defines a module $M_P(Q,*)=\mathcal{X}(Q,P)$ - this motivates a general definition \ref{def:Conv} for convergent modules. Dually, an enriched module $\mathcal{X}\xmrightarrow{N}\mathcal{I}$ assigns to each point $P$ in $\mathcal{X}$ a distance from $*$ to $P$, with actions
\begin{equation}\label{eq:metN}
N(*,P)+\mathcal{X}(P,Q)\geq N(*,Q)\,.
\end{equation}
Asking for $M$ and $N$ to form an adjunction in $\mathcal{R}\text{-}\mathrm{Mod}$ imposes existence of a counit
\begin{equation}
M(P,*)+N(*,Q)\geq \mathcal{X}(P,Q)
\end{equation}
expressing that the newly introduced distances do not violate the triangular inequality via $*$, enabling us to consider a new space $\mathcal{X}_*$, with an added point $*$. Finally, the unit of the adjunction\footnote{The coend involved in the module composition reduces to $\mathrm{inf}$ when the base of enrichment is $\mathcal{R}$.}
\begin{equation}
0\geq \underset{P\in\mathcal{X}}{\mathrm{inf}} N(*,P)+M(P,*)
\end{equation}
forces the newly adjoined point to have zero distance from (and to) the rest of the space, providing a Cauchy condition analogous to the one for Cauchy sequences. This motivates general definitions \ref{def:Cauchy} of Cauchy modules, and of Cauchy completeness of enriched categories \ref{def:ccCat}. Another important base is the monoidal category $\mathrm{Ab}$ of Abelian groups, where one-object $\mathrm{Ab}$-categories are rings, and they are Morita equivalent (have equivalent categories of (left) modules) if and only if their Cauchy completions are equivalent \cite{Kelly1982}. We review definitions and some results related to general Cauchy completeness in appendix \ref{sec:cc}.

In section \ref{sec:es} we give a modification of the base category $\mathcal{R}$, call it $\mathcal{R}_\bot$, which gives causal spaces as $\mathcal{R}_\bot$-enriched categories, and explain how black holes and wormholes can be described using enriched modules. We also prove a surprising fact that all causal spaces are Cauchy complete, in the sense of enriched category theory.

In section \ref{sec:easy} we give conditions on a monoidal category $\mathcal{V}$ which ensure that a $\mathcal{V}$-category $\mathcal{C}$ is Cauchy complete if and only if the underlying ($\mathrm{Set}$-enriched) category $\mathcal{C}_0$ is Cauchy complete, which for $\mathrm{Set}$-enrichment means that idempotents in $\mathcal{C}_0$ split. As a corollary we add a few more conditions on $\mathcal{V}$ ensuring that all $\mathcal{V}$-enriched categories are Cauchy complete, generalizing the case of $\mathcal{R}_\bot$.

\section{Causal spaces as enriched categories}\label{sec:es}

Given a space-time $E$ one can assign to each time-like path $p$ in $E$ its proper time $T(p)$. Maximizing the proper time $T(p)$ over all time-like paths between two events gives an interval or ``distance'' between them.
This is not distance in the sense of a metric space, mainly because the triangle inequality is inverted. The maximal time will usually (in physical situations) correspond to time measured by an inertial observer, while any accelerated reference frame would measure a shorter time, with a photon bouncing from appropriately set up mirrors would ``measure'' a zero time. However, we used maximizing over all time-like paths, rather than an inertial path, because of possible existence of Lorentzian manifolds where there are causally related points which do not have a (unique) inertial path between them. This is analogous to minimizing path length over all paths on a Riemannian manifold to obtain metric, where, for example antipodal points on a sphere have multiple shortest paths, or two points in a plane on the opposite side of a cut out (closed) disc have no path with a minimal length between them.

To get the inverted triangular inequality one could just invert the arrows of $\mathcal{R}$. On one hand, such a category could no longer be closed because the object $0$ would be the monoidal identity and the initial object at the same time, which would mean that tensoring (summing) does not preserve colimits (in particular, the initial object), since, for example
\begin{equation}
1=1+0\neq 0\,.
\end{equation}
On the other hand, physically, there would be no object in the monoidal category that could be assigned to space-like separated events. Both of the problems are solved by freely adding an initial object which we denote by $\bot$. So, the correct base for enrichment is formally given by
\begin{defn}
A symmetric closed monoidal category 
$\mathcal{R_\bot }$ is defined to have
\begin{itemize}
\item objects the real positive numbers $[0,\infty)$ with infinity $\infty$ and and the additional object $\bot$
\item arrows $a\rightarrow b$ existing uniquely if $a=\bot$, $b=\infty$ or $a\leq b$, forming a total order
\item tensor product $+:\mathcal{R_\bot }\times\mathcal{R_\bot }\rightarrow \mathcal{R_\bot }$ given by
\begin{equation}
\begin{tabular}{c|c|c|c}
+&$\bot$&$b$&$\infty$\\\hline
$\bot$&$\bot$&$\bot$&$\bot$\\\hline
$a$&$\bot$&$a+b$&$\infty$\\\hline
$\infty$&$\bot$&$\infty$&$\infty$\\
\end{tabular}
\end{equation}
\item internal hom $-:\mathcal{R_\bot }^\mathrm{op}\times\mathcal{R_\bot }\rightarrow \mathcal{R_\bot }$ given by
\begin{equation}
\begin{tabular}{c|c|c|c}
-&$\bot$&$b$&$\infty$\\\hline
$\bot$&$\infty$&$\infty$&$\infty$\\\hline
$a$&$\bot$&$
\begin{cases} b-a, & a\leq b \\
\bot, & a>b \end{cases}
$&$\infty$\\\hline
$\infty$&$\bot$&$\bot$&$\infty$\\
\end{tabular}
\end{equation}
\end{itemize}
\end{defn}
With this direction of arrows, all the colimits are suprema, and limits are infima.

A category $\mathcal{E}$ enriched in $\mathcal{R_\bot }$ has objects $X,Y,...$ interpreted as events, and homs $\mathcal{E}(X,Y)\in\mathcal{R_\bot }$ interpreted as ``distances'' or intervals. If $\mathcal{E}(X,Y)=\bot$ then $Y$ is not in the future of $X$, equivalently said, $X$ cannot cause $Y$. The composition of homs witnesses that the chosen time between the two events is the largest,
\begin{equation}\label{eq:compR}
\mathcal{E}(X,Y)+\mathcal{E}(Y,Z) \leq \mathcal{E}(X,Z) 
\end{equation}
and the unit
\begin{equation}\label{eq:unitR}
0\leq \mathcal{E}(X,X)
\end{equation}
 prevents endohoms from being $\bot$. The associativity and unit axioms are trivially satisfied because $\mathcal{R_\bot }$ is a poset.

\begin{example}
In a Minkowski 2D space-time objects are points in $(t,x)\in\mathbb{R}^2$ and homs are
\begin{equation}
\mathcal{E}((t,x),(t',x')) = \begin{cases} \sqrt{(t'-t)^2-(x'-x)^2}, &\mbox{if } t'-t\geq |x'-x| \\
\bot, & \mathrm{otherwise} \end{cases}
\end{equation}
\end{example}

\begin{prop}\label{prop:homs}
Properties of homs of $\mathcal{E}$ include
\begin{enumerate}
\item endohoms are monoidal idempotents
\begin{equation}\label{eq:endoE}
\mathcal{E}(X,X)+\mathcal{E}(X,X)=\mathcal{E}(X,X)
\end{equation}
\item the action of endohoms on other homs is given by equalities
\begin{align}
\mathcal{E}(Y,X)+\mathcal{E}(X,X)=\mathcal{E}(Y,X)\label{eq:actionEq1}\\
\mathcal{E}(X,X)+\mathcal{E}(X,Y)=\mathcal{E}(X,Y)\label{eq:actionEq2}
\end{align}
\item possible endohoms are
\begin{equation}
\mathcal{E}(X,X)=0\;\;\mathrm{or}\;\;\mathcal{E}(X,X)=\infty
\end{equation}
\begin{enumerate}
\item
if $\mathcal{E}(X,X)=\infty$, all the homs $\mathcal{E}(Y,X)$ and $\mathcal{E}(X,Y)$ are either $\bot$ or $\infty$
\item
if $\mathcal{E}(X,X)=0$, either both $\mathcal{E}(X,Y)$ and $\mathcal{E}(Y,X)$ equal $0$ or at least one equals $\bot$
\end{enumerate}
\end{enumerate}
\end{prop}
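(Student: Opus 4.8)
The plan is to derive everything from just two ingredients: the $\mathcal{R}_\bot$-enriched structure of $\mathcal{E}$ supplies the composition inequality (\ref{eq:compR}) and the unit inequality (\ref{eq:unitR}), while the tensor $+$ on $\mathcal{R}_\bot$, being a functor, is monotone in each variable, has $0$ as a strict unit, and has $\bot$ absorbing. Since $\mathcal{R}_\bot$ is a poset, no associativity or unit coherence has to be verified, so the whole proposition reduces to elementary order arithmetic read off the two tables defining $+$ and the internal hom.

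For (1) and (2) I would establish the two inequalities between the relevant elements separately. The ``$\leq$'' direction is immediate from (\ref{eq:compR}) applied to the object strings $X,X,X$, then $Y,X,X$, then $X,X,Y$. For the reverse ``$\geq$'' direction, start from the unit inequality $0\leq\mathcal{E}(X,X)$ and tensor it on the appropriate side with the hom in question; monotonicity of $+$ together with $0$ being the strict unit gives, for instance, $\mathcal{E}(Y,X)=\mathcal{E}(Y,X)+0\leq\mathcal{E}(Y,X)+\mathcal{E}(X,X)$. Antisymmetry of the order then yields the equalities (\ref{eq:endoE}), (\ref{eq:actionEq1}), (\ref{eq:actionEq2}); note (\ref{eq:endoE}) is simply the case $Y=X$ of either equation in (2).

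For (3), first note $\mathcal{E}(X,X)\neq\bot$ because $0\not\leq\bot$ in $\mathcal{R}_\bot$, so by (\ref{eq:endoE}) the endohom is a non-$\bot$ solution of $a+a=a$; inspecting the $+$-table, the only such elements are $0$ and $\infty$, since any $a\in(0,\infty)$ gives $2a\neq a$. For (3a), substitute $\mathcal{E}(X,X)=\infty$ into (\ref{eq:actionEq1})--(\ref{eq:actionEq2}): the equations $a+\infty=a$ and $\infty+a=a$ force $a\in\{\bot,\infty\}$ from the table, so every $\mathcal{E}(Y,X)$ and $\mathcal{E}(X,Y)$ lies in $\{\bot,\infty\}$. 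For (3b), substitute $\mathcal{E}(X,X)=0$ and apply (\ref{eq:compR}) to the string $X,Y,X$ to get $\mathcal{E}(X,Y)+\mathcal{E}(Y,X)\leq 0$; the only elements $\leq 0$ are $\bot$ and $0$. If the sum is $0$, the $+$-table forces $\mathcal{E}(X,Y)=\mathcal{E}(Y,X)=0$; if it is $\bot$, absorption forces at least one of the two summands to be $\bot$. This is exactly the stated dichotomy.

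I do not expect a genuine obstacle; the only care needed is the bookkeeping around $\bot$ — in particular that $\bot$ is strictly below $0$, so that (\ref{eq:unitR}) really does exclude it as an endohom, and that $\bot$ is absorbing for $+$. Everything else is a direct table lookup.
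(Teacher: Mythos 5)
Your proof is correct and follows essentially the same route as the paper: the composition inequality for the strings $X,X,X$, $Y,X,X$, $X,X,Y$ in one direction, the unit tensored with the relevant hom in the other, then identifying $0$ and $\infty$ as the only admissible idempotents and doing the same two case analyses for (3a) and (3b). The extra bookkeeping you flag (that the unit arrow $0\rightarrow\mathcal{E}(X,X)$ excludes $\bot$, and that $\bot$ is absorbing) is exactly what the paper's terser proof implicitly relies on.
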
 
\begin{proof}
\begin{enumerate}
\item Adding $\mathcal{E}(X,X)$ to the unit (\ref{eq:unitR}) gives
\begin{equation}
\mathcal{E}(X,X)\leq\mathcal{E}(X,X)+\mathcal{E}(X,X)  
\end{equation}
On the other hand, the composition (\ref{eq:compR}) for $Y$ and $Z$ equal $X$ gives
\begin{equation}
\mathcal{E}(X,X)+\mathcal{E}(X,X)\leq \mathcal{E}(X,X) 
\end{equation}
\item Adding $\mathcal{E}(X,Y)$ to the unit, and and the compositions
\begin{align}
\mathcal{E}(X,Y)+\mathcal{E}(X,X)\leq \mathcal{E}(X,Y) \\
\mathcal{E}(X,X)+\mathcal{E}(Y,X)\leq \mathcal{E}(Y,X)
\end{align}
give the required result.
\item By part (1) of the proposition, noting that objects $\bot$, $0$ and $\infty$ are the only monoidal idempotents in $\mathcal{R}_\bot$, and using the unit (\ref{eq:unitR}), restricts possible endohoms to $0$ and $\infty$.
\begin{enumerate}
\item Case analysis on (\ref{eq:actionEq1})-(\ref{eq:actionEq2})
\item Case analysis on $\mathcal{E}(Y,X)+\mathcal{E}(X,Y)\leq \mathcal{E}(X,X)=0$
\end{enumerate}
\end{enumerate}
\end{proof}
\begin{remark}
 Call an event with the infinite endohom (situation (3a)) \textit{ irregular}. Although unphysical, these are needed to keep $\mathcal{R}_\bot$ closed. For instance, $\bot$ is irregular, since $\bot-\bot=\infty$. However, part (3a) of Proposition \ref{prop:homs} ensures that such points in space are either causally unrelated to, or at an infinite temporal distance from, the rest of the (physical) space.
Part (3b) of Proposition \ref{prop:homs} prevents the grandfather paradox in the physical part of the space - given two regular (endohom being $0$) events $X$ and $Y$, it is not possible for both of them to cause each other, unless they happen simultaneously.
\end{remark}

\begin{remark}
A program for formulating quantum gravity using discrete partial orders, started in \cite{Bombelli1987} and reviewed in, for example, \cite{Dowker2013}, has a notion of \textit{causal set} as a basic mathematical structure. If we take the underlying category $\mathcal{E}_0$ of a causal space $\mathcal{E}$, we get a general preordered set without requirements for antisymmetry and local finiteness - the information about local time-like intervals is contained in homs, and allows different events to happen at the same point in space-time. On the other hand, each causal set has a corresponding causal space, where homs come from the local finiteness condition - if $A$ causes $B$, then $\mathcal{E}(A,B)$ is the (integer) length of the longest (necessarily finite) path between $A$ and $B$.
\end{remark}

\subsection{Enrichment in $[-\infty,\infty]$}

A possible generalization of both metric and event spaces, would be enrichment in $[-\infty,\infty]$, with an arrows from $A$ to $B$, if $B\leq A$. Then positive length would denote space-like intervals, with triangle inequality (\ref{ineq:triangle}), while negative numbers would be interpreted as time-like intervals. However, the triangle inequality with mixed entries is too restrictive, so the Minkowski 2D space-time is not enriched in $[-\infty,\infty]$. For example,
\begin{align}
A&=(0,0)\\
B&=(-1,0)\\
C&=(0,1)
\end{align}
gives
\begin{align}
\mathcal{E}(A,B)+\mathcal{E}(B,C)&= -1+0=-1\\
\mathcal{E}(A,C)&=1\,.
\end{align}

\subsection{$\mathcal{R}_\bot\text{-}\mathrm{Cat}$}

An $\mathcal{R_\bot }$-functor $F:\mathcal{D}\rightarrow\mathcal{E}$ maps events in $\mathcal{D}$ to events in $\mathcal{E}$ such that the distances increase
\begin{equation}\label{eq:opLawFun}
\mathcal{D}(A,B)\leq \mathcal{E}(FA,FB)\,.
\end{equation}
In particular, space-like intervals (given by $\bot$) can map to time-like intervals.

Natural transformations $\eta:F\rightarrow G$ indicate that for all $A\in D$ the event $GA$ is in the future of $FA$.

Since $\mathcal{R_\bot }$ is symmetric, closed and (co)complete, so is $\mathcal{R_\bot }\text{-}\mathrm{Cat}$ \cite{Kelly1982}. Explicitly, the tensor product $\mathcal{D}+\mathcal{E}$ of $\mathcal{D}$ and $\mathcal{E}$ has
\begin{itemize}
\item objects pairs $(A,X)$
\item homs $(\mathcal{D}+\mathcal{E})((A,X),(B,Y))=\mathcal{D}(A,B)+\mathcal{E}(X,Y)$
\end{itemize}
and $[\mathcal{D},\mathcal{E}]$ has
\begin{itemize}
\item objects $\mathcal{R_\bot }$-functors $F$, $G$...
\item homs
\begin{equation}
 [\mathcal{D},\mathcal{E}](F,G)=\int_{A\in\mathcal{D}}\mathcal{E}(FA,GA)
=\inf_{A\in\mathcal{D}}\mathcal{E}(FA,GA)\,.
\end{equation}
\end{itemize}

Finally, given a causal space $\mathcal{E}$, using symmetry of $\mathcal{R}_\bot$ we can form the opposite $\mathcal{E}^\mathrm{op}$ by taking the same set of objects and 
\begin{equation}
\mathcal{E}^\mathrm{op}(X,Y)=\mathcal{E}(Y,X)
\end{equation}
for homs.

\subsection{Modules, black holes and wormholes}

A (2-sided) module $M:\mathcal{D}\nrightarrow\mathcal{E}$ is defined as an $\mathcal{R}_\bot$-functor
\begin{equation}
M:\mathcal{E}^\mathrm{op}+\mathcal{D}\rightarrow\mathcal{R}_\bot
\end{equation}
and can be equivalently given by actions
\begin{align}
\mathcal{E}(Y,X)+M(X,A)\leq M(Y,A)\label{eq:mod}\\
M(X,A)+\mathcal{D}(A,B)\leq M(X,B)\label{eq:mod2}\,.
\end{align}
These inequalities enable us to ``glue'' the two causal spaces with homs between objects of $\mathcal{E}$ and $\mathcal{D}$ given by $M$, and all homs from $\mathcal{D}$ to $\mathcal{E}$ being $\bot$, a process known as a lax colimit or collage \cite{Street2001}.
\begin{remark}
Physically, such a module can be interpreted as a wormhole going from $\mathcal{E}$ to $\mathcal{D}$. In particular, when $\mathcal{D}=\mathcal{I}$ the module $M$ is a black hole in $\mathcal{E}$.
\end{remark}

Composition of modules $N:\mathcal{C}\nrightarrow\mathcal{D}$ and $M:\mathcal{D}\nrightarrow\mathcal{E}$ is given by
\begin{align}
(M\circ N)(X,P)&= \int^{A\in\mathcal{D}} M(X,A)+N(A,P)\\
&=  \sup_{A\in\mathcal{D}}M(X,A)+N(A,P)
\end{align}
for all $P\in \mathcal{C}$ and $X\in \mathcal{E}$.

\subsection{Cauchy completeness}

To give a pair of adjoined modules 
$(M\dashv N):\mathcal{I}\xmrightarrow{}\mathcal{E}$
is the same as to give a pair of $\mathcal{R_\bot }$-functors
\begin{align}
M&:\mathcal{E}^\mathrm{op}\rightarrow\mathcal{R_\bot }\\
N&:\mathcal{E}\rightarrow\mathcal{R_\bot }
\end{align}
which, in addition to the actions (\ref{eq:mod})-(\ref{eq:mod2})
\begin{align}
\mathcal{E}(Y,X)+M(X)\leq M(Y)\label{eq:ineqRCauchy}\\
N(X)+\mathcal{E}(X,Y)\leq N(Y)\label{eq:ineqLCauchy}
\end{align}
satisfy (existence of the unit and counit of the adjunction)
\begin{align}
0 &\leq \sup_{X} (N(X)+M(X))\label{eq:Zpick}\\
\mathcal{E}(X,Y)&\geq M(X)+N(Y)\,.\label{eq:wormholeCounit}
\end{align}

\begin{prop}
Any $\mathcal{R_\bot}$ enriched category $\mathcal{E}$ is Cauchy complete.
\end{prop}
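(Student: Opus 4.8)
The plan is to show directly that every Cauchy module into $\mathcal{E}$ is representable. Unwinding Definition~\ref{def:Cauchy} over the base $\mathcal{R}_\bot$, such a module is exactly a pair $(M\dashv N):\mathcal{I}\to\mathcal{E}$ subject to (\ref{eq:ineqRCauchy})--(\ref{eq:wormholeCounit}); and since $\mathcal{R}_\bot$ is a poset, an isomorphism of $\mathcal{R}_\bot$-valued modules is an equality, so the module is representable precisely when there is an object $E\in\mathcal{E}$ with $M(Y)=\mathcal{E}(Y,E)$ and $N(Y)=\mathcal{E}(E,Y)$ for all $Y$. Hence the entire task is to produce such an $E$.

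First I would extract a candidate object from the unit inequality (\ref{eq:Zpick}). A supremum of elements of $\mathcal{R}_\bot$ fails to dominate $0$ only when every term is $\bot$, and from the tensor table $N(X)+M(X)\neq\bot$ forces $N(X)\neq\bot$ and $M(X)\neq\bot$. So (\ref{eq:Zpick}) guarantees an object $X_0$ with $M(X_0)\neq\bot\neq N(X_0)$; this $X_0$ will be the representing object. Next I would pin down $\mathcal{E}(X_0,X_0)$, $M(X_0)$, $N(X_0)$. By Proposition~\ref{prop:homs}(3) the endohom $\mathcal{E}(X_0,X_0)$ is $0$ or $\infty$. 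If it is $0$, then (\ref{eq:wormholeCounit}) at $(X_0,X_0)$ reads $0\geq M(X_0)+N(X_0)$ with the right side in $[0,\infty]$, forcing $M(X_0)=N(X_0)=0$. If it is $\infty$, then (\ref{eq:ineqRCauchy}) at $Y=X=X_0$ gives $\infty+M(X_0)\leq M(X_0)$, and $M(X_0)\neq\bot$ forces $M(X_0)=\infty$; symmetrically $N(X_0)=\infty$ from (\ref{eq:ineqLCauchy}). Either way $M(X_0)=N(X_0)=\mathcal{E}(X_0,X_0)=:e\in\{0,\infty\}$.

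Finally I would verify representability. Instantiating (\ref{eq:ineqRCauchy}) at $X=X_0$ gives $\mathcal{E}(Y,X_0)+e\leq M(Y)$, while (\ref{eq:wormholeCounit}) with second variable $X_0$ gives $\mathcal{E}(Y,X_0)\geq M(Y)+e$. When $e=0$ these two say exactly $\mathcal{E}(Y,X_0)=M(Y)$. When $e=\infty$, Proposition~\ref{prop:homs}(3a) tells us $\mathcal{E}(Y,X_0)\in\{\bot,\infty\}$: if it is $\infty$ the first inequality gives $M(Y)=\infty$, and if it is $\bot$ the second reads $\bot\geq M(Y)+\infty$, forcing $M(Y)=\bot$; so again $M(Y)=\mathcal{E}(Y,X_0)$. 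The mirror-image argument using (\ref{eq:ineqLCauchy}) together with (\ref{eq:wormholeCounit}) at first variable $X_0$ yields $N(Y)=\mathcal{E}(X_0,Y)$. Hence the module is represented by $X_0$, so $\mathcal{E}$ is Cauchy complete.

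I do not expect a genuine obstacle: after the reductions everything is arithmetic in the three-element ``exotic'' fragment $\{\bot,0,\infty\}$ of $\mathcal{R}_\bot$. The one place that needs care is the irregular case $e=\infty$ of the last step, where naive cancellation is unavailable and one must split on the value of $\mathcal{E}(Y,X_0)$ using Proposition~\ref{prop:homs}(3a); this is the step that makes essential use of the extra object $\bot$. One should also keep in mind that, $\mathcal{R}_\bot$ being a poset, ``represented by $X_0$'' requires nothing beyond the two displayed pointwise equalities.
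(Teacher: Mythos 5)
Your proof is correct, and it shares the paper's skeleton: you extract the witness object from the unit inequality (\ref{eq:Zpick}) by the same $\bot$-argument (which also quietly disposes of the empty category, treated separately in the paper), and you then combine the counit (\ref{eq:wormholeCounit}) with the action (\ref{eq:ineqRCauchy}) to identify $M(-)$ with $\mathcal{E}(-,X_0)$. Where you diverge is in the verification step: you first pin down $M(X_0)=N(X_0)=\mathcal{E}(X_0,X_0)\in\{0,\infty\}$ and then split into the regular and irregular cases, invoking Proposition \ref{prop:homs}(3a) for the latter, whereas the paper closes the argument in one stroke with the chain $M(Y)\leq M(Y)+N(Z)\leq\mathcal{E}(Y,Z)\leq\mathcal{E}(Y,Z)+M(Z)\leq M(Y)$, which uses only $M(Z),N(Z)\geq 0$ and monotonicity of $+$; no case distinction and no appeal to Proposition \ref{prop:homs} is needed, since the $\bot$ values take care of themselves when every step is an inequality in the poset. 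Also, your additional verification that $N(Y)=\mathcal{E}(X_0,Y)$ is not required by Definition \ref{def:ccCat}, which asks only that the left adjoint $M$ be representable; it is in any case automatic, since the right adjoint of $\mathcal{E}(-,X_0)$ is $\mathcal{E}(X_0,-)$ and right adjoints over a poset base are unique on the nose. In short, your route yields extra explicit information (the endohom of the representing object and the exact values of $M,N$ there) at the cost of a case analysis that the paper's sandwich argument renders unnecessary.
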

\begin{proof}
First, consider the case when $\mathcal{E}$ is empty. Then $M$ and $N$ are unique empty functors, but they cannot be adjoint as the RHS of (\ref{eq:Zpick}) equals $\bot$
Since there are no Cauchy modules, $\mathcal{E}$ is Cauchy complete.

Now, assume $\mathcal{E}$ is non-empty and $M$ is a Cauchy module, that is there is $N$ such that (\ref{eq:ineqRCauchy})-(\ref{eq:wormholeCounit}) hold. In particular, since $\bot$ is the only element smaller than $0$, equation (\ref{eq:Zpick}) implies that there is $Z\in \mathcal{E}$ such that
\begin{equation}\label{eq:eqZ0}
0\leq N(Z)+M(Z)\,.
\end{equation}
If either $N(Z)$ or $M(Z)$ was equal to $\bot$ the sum would equal $\bot$ as well, so we have that both terms are greater or equal than $0$,
\begin{equation}\label{eq:eqZ}
0\leq N(Z) \;\;\mathrm{and}\;\; 0\leq M(Z)\,.
\end{equation}

Now we have 
\begin{align}
M(Y)&\leq M(Y)+N(Z)\\
&\leq \mathcal{E}(Y,Z)\\
&\leq \mathcal{E}(Y,Z)+M(Z)\\
&\leq M(Y)
\end{align}
proving that $M(Y)=\mathcal{E}(Y,Z)$, and showing that $Z$ represents $M$.
\end{proof}

\section{Cauchy completeness via idempotent splitting}\label{sec:easy}

Here we consider which monoidal categories $\mathcal{V}$ produce enriched categories whose Cauchy completeness is determined by idempotent splitting in the corresponding underlying category. We begin with an easy direction.  
\begin{prop}
Let $\mathcal{V}$ be a locally small, cocomplete symmetric monoidal closed category. If a small $\mathcal{V}$-category $\mathcal{E}$ is Cauchy complete then idempotents split in the underlying category $\mathcal{E}_0$.
\end{prop}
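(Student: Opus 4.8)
The plan is to realise every idempotent of $\mathcal{E}_0$ as a Cauchy module $\mathcal{I}\nrightarrow\mathcal{E}$, so that Cauchy completeness of $\mathcal{E}$ forces it to acquire a representing object, which will be its splitting. Fix an idempotent $e\colon X\to X$ in $\mathcal{E}_0$, that is, a morphism $e\colon I\to\mathcal{E}(X,X)$ in $\mathcal{V}$ with $e\circ e=e$ in $\mathcal{E}_0$. First I would cut out of the representable presheaf $\mathcal{E}(-,X)\colon\mathcal{E}^{\mathrm{op}}\to\mathcal{V}$ the piece determined by $e$: postcomposition with $e$ is a $\mathcal{V}$-natural idempotent $\bar e\colon\mathcal{E}(-,X)\Rightarrow\mathcal{E}(-,X)$, and since $\mathcal{V}$ is cocomplete the presheaf $\mathcal{V}$-category $[\mathcal{E}^{\mathrm{op}},\mathcal{V}]$ is cocomplete, hence idempotent-complete (coequalising $\bar e$ with the identity splits it). So $\bar e$ splits as $\mathcal{E}(-,X)\xrightarrow{\pi}M\xrightarrow{\iota}\mathcal{E}(-,X)$ with $\pi\iota=1_M$ and $\iota\pi=\bar e$; dually, precomposition with $e$ splits on $\mathcal{E}(X,-)$, yielding a retract $N$ with section and retraction $\iota',\pi'$.

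The next step is to check that $M$, viewed as a module $\mathcal{I}\nrightarrow\mathcal{E}$, is Cauchy, with right adjoint $N$. The representable module $\mathcal{E}(-,X)$ is Cauchy — it is left adjoint to $\mathcal{E}(X,-)$, the unit and counit being assembled from the composition of $\mathcal{E}$ and the identity $I\to\mathcal{E}(X,X)$ — and $M$ is a retract of it inside the bicategory $\mathcal{V}\text{-}\mathrm{Mod}$. Because the hom-categories of $\mathcal{V}\text{-}\mathrm{Mod}$ are cocomplete (colimits of modules are computed pointwise in $\mathcal{V}$) and hence idempotent-complete, a retract of a left-adjoint $1$-cell is again a left adjoint: concretely, the unit $I\to N\circ M$ and counit $M\circ N\Rightarrow\mathcal{E}$ are obtained from those for $\mathcal{E}(-,X)\dashv\mathcal{E}(X,-)$ by splicing in $\pi,\iota,\pi',\iota'$, and the two triangle identities reduce to the triangle identities for $\mathcal{E}(-,X)\dashv\mathcal{E}(X,-)$ together with $\iota\pi=\bar e$, its dual on $N$, and the idempotence of $e$. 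I expect this to be the only step needing real care — it is where one uses that the idempotent splits \emph{coherently} — though one could instead simply invoke the standard fact that the idempotent (Karoubi) completion of $\mathcal{E}_0$ always sits inside the Cauchy completion of $\mathcal{E}$ (equivalently, that a split idempotent is an absolute colimit).

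Finally, apply Cauchy completeness: the Cauchy module $M$ must be representable, so there are an object $A\in\mathcal{E}$ and an isomorphism of modules $M\cong\mathcal{E}(-,A)$. Transporting $\iota$ and $\pi$ across this isomorphism gives $\mathcal{V}$-natural transformations $\mathcal{E}(-,A)\to\mathcal{E}(-,X)$ and $\mathcal{E}(-,X)\to\mathcal{E}(-,A)$, which by the enriched Yoneda lemma are postcomposition with uniquely determined morphisms $i\in\mathcal{E}_0(A,X)$ and $r\in\mathcal{E}_0(X,A)$. Under this dictionary $\pi\iota=1_M$ translates to $r\circ i=1_A$ and $\iota\pi=\bar e$ (postcomposition with $e$) translates to $i\circ r=e$, exhibiting $A$ together with $i$ and $r$ as a splitting of $e$ in $\mathcal{E}_0$. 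Since $e$ was an arbitrary idempotent, idempotents split in $\mathcal{E}_0$, as required; the argument uses nothing beyond the enriched Yoneda lemma and cocompleteness of $\mathcal{V}$.
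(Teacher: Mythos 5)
Your proposal is correct and follows essentially the same route as the paper: split the idempotents induced by $e$ on the representable modules $\mathcal{E}(-,X)$ and $\mathcal{E}(X,-)$ in the (cocomplete) presheaf categories, splice the splitting maps into the unit and counit of $\mathcal{E}(-,X)\dashv\mathcal{E}(X,-)$ (using that the two induced idempotents are mates) to exhibit $M\dashv N$, then use Cauchy completeness and the Yoneda lemma to transport the retract data into a splitting of $e$ in $\mathcal{E}_0$. The only cosmetic difference is that you make explicit the coequalizer argument for splitting idempotents in the presheaf category and the Yoneda translation at the end, which the paper leaves implicit.
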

\begin{proof}
Let $I\xrightarrow{e}\mathcal{E}(E,E)$ be an idempotent in $\mathcal{E}_0$. Let $E_*:\mathcal{I}\nrightarrow\mathcal{E}$ and $E^*:\mathcal{E}\nrightarrow\mathcal{I}$ denote the modules induced by the $\mathcal{V}$-functor picking the object $E$. That is
\begin{align}
E_*(X)=\mathcal{E}(X,E)\\
E^*(X)=\mathcal{E}(E,X)
\end{align}
with actions given by composition in $\mathcal{E}$.
The induced module endomorphisms  $e_*:E_*\Rightarrow E_*$ and $e^*:E^*\Rightarrow E^*$ are idempotent because $e$ is.
Since in the corresponding presheaf category idempotents split, 
there is a module $M:\mathcal{I}\nrightarrow\mathcal{E}$, and module morphisms $f:E_*\Rightarrow M$, $g:M\Rightarrow E_*$ splitting $e_*$. Similarly, there is a module $N:\mathcal{E}\nrightarrow\mathcal{I}$, and module morphisms $k:E^*\Rightarrow N$, $l:N\Rightarrow E^*$ splitting $e^*$. Using the fact that $e^*$ and $e_*$ are mates under the adjunction $E_*\dashv E^*$, it is easy to show that\footnote{Here $\otimes$ denotes the horizontal composition, and $\circ$ the vertical composition of module morphisms.}
 $(k\otimes f)\circ\eta$ and $\epsilon \circ(g \otimes l)$ are unit and a counit of the adjunction $M\dashv N$. Since $\mathcal{E}$ is Cauchy complete, $M$ is represented by an object, say $D\in \mathcal{E}$, and so, using the weak Yoneda lemma, $e$ splits through it. 
\end{proof}

\begin{prop}\label{prop:CC0CC}
Consider the following properties of a cocomplete, locally small, symmetric monoidal closed category $\mathcal{V}$:
\begin{enumerate}[label=(\roman*)]
\item the underlying functor
\begin{equation}
\mathcal{V}(I,-):\mathcal{V}\rightarrow \mathrm{\bf Set}
\end{equation}
takes regular epi families to epi families (joint surjections),
\item the function
\begin{equation}
\mathcal{V}(I,A)\times\mathcal{V}(I,B)
\xrightarrow{\otimes}\mathcal{V}(I\otimes I,A \otimes B)
\xrightarrow{-\circ\rho_I}\mathcal{V}(I,A \otimes B)
\end{equation}
is a bijection,
\end{enumerate}
then a small $\mathcal{V}$-category $\mathcal{E}$ is Cauchy complete if idempotents split in the underlying category $\mathcal{E}_0$.
\end{prop}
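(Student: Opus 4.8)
The plan is to transport the hypothesized Cauchy module $M\dashv N:\mathcal{I}\nrightarrow\mathcal{E}$ along the underlying-set functor $\Phi:=\mathcal{V}(I,-):\mathcal{V}\to\mathbf{Set}$, to extract from it an idempotent of $\mathcal{E}_0$, to split that idempotent, and to recognize the splitting object as a representing object for $M$. Since $\mathcal{V}$ is closed symmetric monoidal, $\Phi$ is canonically lax monoidal, and condition (ii) says exactly that its binary comparison maps are bijective. Taking $A=B=I$ in (ii) and using the coherence identity $\lambda_I=\rho_I$, the composite displayed there is identified with the multiplication of the monoid $(\mathcal{V}(I,I),\circ,\mathrm{id}_I)$; a monoid whose multiplication is a bijection is trivial, so $\Phi I\cong 1$ and $\Phi$ is \emph{strong} monoidal. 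Consequently, by change of base along $\Phi$, the $\mathcal{V}$-category $\mathcal{E}$ goes to its underlying category $\mathcal{E}_0$, the modules $M,N$ go to a $\mathbf{Set}$-presheaf $M_0$ on $\mathcal{E}_0$ and a functor $N_0:\mathcal{E}_0\to\mathbf{Set}$ together with their actions, and — since the composite module $M\circ N$ lives over the one-object category $\mathcal{I}$, so $(M\circ N)(X,Y)=M(X)\otimes N(Y)$ with no coend intervening — the counit $\epsilon_{X,Y}:M(X)\otimes N(Y)\to\mathcal{E}(X,Y)$ goes to a family of functions $M_0(X)\times N_0(Y)\to\mathcal{E}_0(X,Y)$.

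The one piece of data $\Phi$ need not preserve is the unit $\eta:I\to N\circ M=\int^{X\in\mathcal{E}}N(X)\otimes M(X)$, a genuine coend; here condition (i) enters. Since $\mathcal{E}$ is small, $N\circ M$ is presented as a coequalizer of coproducts taken over the objects (resp.\ pairs of objects) of $\mathcal{E}$, so the cowedge maps $\iota_X:N(X)\otimes M(X)\to N\circ M$ form a regular epimorphic family; by (i), the family $\bigl(\Phi\iota_X\bigr)_X$ is jointly surjective. As $\eta\in\Phi(N\circ M)$, it lies in the image of some $\Phi\iota_{X_0}$, i.e.\ $\eta=\iota_{X_0}\circ p_0$ for an object $X_0\in\mathcal{E}$ and a morphism $p_0:I\to N(X_0)\otimes M(X_0)$ of $\mathcal{V}$; using (ii) once more, $p_0=(n_0\otimes m_0)\circ\rho_I^{-1}$ for unique $m_0:I\to M(X_0)$ and $n_0:I\to N(X_0)$, that is, $m_0\in M_0(X_0)$ and $n_0\in N_0(X_0)$ represent the unit through the $X_0$-th cowedge. (The same reasoning handles empty $\mathcal{E}$: then $N\circ M$ is the initial object of $\mathcal{V}$, and (i) applied to the empty family forces $\mathcal{V}(I,N\circ M)=\emptyset$, so there is no $\eta$, hence no Cauchy module.)

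Now put $e:=\epsilon_{X_0,X_0}\circ(m_0\otimes n_0)\circ\rho_I^{-1}:I\to\mathcal{E}(X_0,X_0)$, an endomorphism of $X_0$ in $\mathcal{E}_0$. The standard ``Cauchy idempotent'' argument applies: since $\epsilon$ is a morphism of modules (compatible with the left and right $\mathcal{E}$-actions) and $\eta=\iota_{X_0}\circ(n_0\otimes m_0)\circ\rho_I^{-1}$, the two triangle identities for $M\dashv N$ reduce — upon restricting the coends occurring in them to the $X_0$-th cowedge — to explicit unit-type equations in $m_0,n_0,\epsilon$, and a short string-diagram computation with these and the action laws yields $e\bullet e=e$. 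As idempotents split in $\mathcal{E}_0$, write $e=i\circ r$ with $r:X_0\to D$, $i:D\to X_0$, $r\circ i=\mathrm{id}_D$; equivalently $i:I\to\mathcal{E}(D,X_0)$ and $r:I\to\mathcal{E}(X_0,D)$ in $\mathcal{V}$ satisfying the corresponding two equations ($r$ composed with $i$ is the identity at $D$, and $i$ composed with $r$ is $e$).

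It remains to exhibit a $\mathcal{V}$-module isomorphism $M\cong\mathcal{E}(-,D)$, whence $\mathcal{E}$ is Cauchy complete. Take the universal element $u:I\xrightarrow{\rho_I^{-1}}I\otimes I\xrightarrow{i\otimes m_0}\mathcal{E}(D,X_0)\otimes M(X_0)\to M(D)$ (last arrow the $M$-action); by the weak Yoneda lemma it induces a module map $\widehat u:\mathcal{E}(-,D)\Rightarrow M$ whose component at $Y$ is $\mathcal{E}(Y,D)\xrightarrow{\mathcal{E}(Y,i)}\mathcal{E}(Y,X_0)\xrightarrow{1\otimes m_0}\mathcal{E}(Y,X_0)\otimes M(X_0)\xrightarrow{\mathrm{act}}M(Y)$ (coherence isomorphisms suppressed). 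A candidate inverse $v:M\Rightarrow\mathcal{E}(-,D)$ has component at $Y$ equal to $M(Y)\xrightarrow{1\otimes n_0}M(Y)\otimes N(X_0)\xrightarrow{\epsilon_{Y,X_0}}\mathcal{E}(Y,X_0)\xrightarrow{\mathcal{E}(Y,r)}\mathcal{E}(Y,D)$. The left-action compatibility of $\epsilon$ shows $v$ is a module map and, together with functoriality of the covariant hom $\mathcal{E}(Y,-)$, reduces $v\circ\widehat u$ to post-composition by $r\circ e\circ i=\mathrm{id}_D$; $\widehat u\circ v=\mathrm{id}_{M}$ follows by a similar reduction that additionally invokes the triangle identities in the restricted form of the previous paragraph. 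Hence $M$ is represented by $D$. I expect the lifting step of the second paragraph to be the real obstacle — it is the only place where both hypotheses are genuinely needed, and the delicate point is that (i) must localize $\eta$ \emph{itself}, not merely $\Phi\eta$, through a single cowedge $\iota_{X_0}$ inside $\mathcal{V}$; the idempotency check and the verification that $v$ and $\widehat u$ are mutually inverse are routine, if lengthy, bicategorical computations.
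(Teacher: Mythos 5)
Your proposal is correct and follows essentially the same route as the paper: use condition (i) to factor the unit $\eta$ through a single coend cowedge, condition (ii) to split the resulting map as $m_0\otimes n_0$, extract from the triangle identity an idempotent endomorphism of $X_0$ in $\mathcal{E}_0$, split it, and check that the splitting object represents $M$. The extra scaffolding you add (strong monoidality of $\mathcal{V}(I,-)$, the empty case, the explicit mutually inverse module maps) is harmless, and your closing worry about localizing ``$\eta$ itself rather than $\Phi\eta$'' is moot, since $\eta$ is an element of $\mathcal{V}(I,N\circ M)$ and joint surjectivity of the maps $\mathcal{V}(I,w_X)$ is exactly the factorization you need.
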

\begin{proof}
Let $M:\mathcal{I}\xmrightarrow{}\mathcal{E}$ be a Cauchy module with a right adjoint $N$ which amounts to giving actions
\begin{align}
M(X) \otimes \mathcal{E}(Y,X)\xrightarrow{\alpha_{Y,X}} M(Y)\label{eq:VineqRCauchy2}\\
\mathcal{E}(X,Y) \otimes N(X)\xrightarrow{\beta_{X,Y}} N(Y)\label{eq:VineqLCauchy2}
\end{align}
compatible with unit and composition in $\mathcal{E}$,
and unit and counit for the adjunction
\begin{align}
\eta&:I\xrightarrow{}\int^{Y}M(Y)\otimes N(Y)\, ,\\
\epsilon_{X,Y}&:N(Y)\otimes M(X)\rightarrow \mathcal{E}(X,Y)\, .
\end{align}
The coend cowedge components
\begin{equation}
M(X)\otimes N(X)\xrightarrow{w_X}\int^{Y}M(Y)\otimes N(Y)
\end{equation}
form a jointly regular epic family, see section \ref{sec:epi} example \ref{ex:moduleReg}. By condition (i), the functor $\mathcal{V}(I,-)$ takes them to a jointly surjective family of functions $\mathcal{V}(I,w_X)$. This in particular means that the unit of the adjunction is in the image of a function $\mathcal{V}(I,w_Z)$, for some $Z$. So, the unit decomposes as $\eta=w_Z\circ{z}$.
From condition (ii)  we get that ${z}$ can be further decomposed as  ${m}\otimes {n}$ for a unique pair of maps ${m}:I\rightarrow M(Z)$ and ${n}:I\rightarrow N(Z)$, to give a final decomposition of the unit
\begin{equation}\label{eqn:splitting}
\eta=w_Z\circ({m}\otimes {n})
\end{equation}

One of the adjunction axioms, together with (\ref{eqn:splitting}) gives a commutative diagram shown in (\ref{diag:adj}).
\begin{equation}\label{diag:adj}
\begin{tikzpicture}[baseline=(current  bounding  box.center)]
\def \strx {3.0};
\def \stry {1.5};

%objects
\node (Gu2) at (0 * \strx ,2 * \stry) {$M(Y)$};
\node (Gu1) at (0 * \strx ,1 * \stry) {$I\otimes M(Y)$};
\node (G) at (0 * \strx ,0 * \stry) {$\int^{C}M(C)\otimes N(C)\otimes M(Y)$};
\node (Gd1) at (0 * \strx ,-1 * \stry) {$\int^{C}M(C)\otimes \mathcal{E}(Y,C)$};
\node (Gd2) at (0 * \strx ,-2* \stry) {$M(Y)$};

\node (Gr1) at (2 * \strx ,0 * \stry) {$M(Z)\otimes N(Z)\otimes M(Y)$};
\node (Gr2) at (2 * \strx ,-1 * \stry) {$M(Z)\otimes \mathcal{E}(Y,Z)$};

%arrows
\path[->,>=angle 90]
(Gu2) edge [bend right=100] node[left] {$1$} (Gd2);
\path[->,>=angle 90]

(Gu2) edge node[left] {$\cong$} (Gu1);
\path[->,>=angle 90]
(Gu1) edge node [left] {$\eta\otimes 1$} (G);
\path[->,>=angle 90]
(G) edge node [left] {$\int^{C}1\otimes \epsilon_{Y,C}$} (Gd1);
\path[->,>=angle 90] 
(Gd1) edge  node[left] {$\cong$} (Gd2);

\path[->,>=angle 90]
(Gu1) edge node [above right] {${m}\otimes{n}\otimes 1$} (Gr1);
\path[->,>=angle 90]
(Gr1) edge node [above] {$w_Z\otimes 1$} (G);
\path[->,>=angle 90]
(Gr1) edge node [right] {$1\otimes \epsilon_{Y,Z}$} (Gr2);
\path[->,>=angle 90]
(Gr2) edge node [above] {$w_Z$} (Gd1);

\end{tikzpicture}
\end{equation}

From the outside of the diagram (\ref{diag:adj}) it follows that the identity on $M(Y)$ decomposes into the following two maps
\begin{align}
	M(Y)\xrightarrow{{n}\otimes 1}N(Z)\otimes M(Y)\xrightarrow{\epsilon_{Y,Z}} \mathcal{E}(Y,Z)\\
	\mathcal{E}(Y,Z)\xrightarrow{{m}\otimes 1}M(Z)\otimes \mathcal{E}(Y,Z)\xrightarrow{\alpha_{Y,Z}}M(Y)\,.
\end{align}
Both of these sets of arrows are $\mathcal{V}$-natural in $Y$, following from $\mathcal{V}$-naturality of $\epsilon$ and compatibility of action $\alpha$ with composition in $\mathcal{E}$. Composing them the other way around we get an idempotent $\mathcal{V}$-natural transformation on $\mathcal{E}(-,Z)$, which is represented by an idempotent arrow $Z\xrightarrow{e}Z$ in $\mathcal{E}_0$. Since idempotents split, there is $Z'$ through which $e$ splits, hence $Z'$ is a representing object for $M$.
\end{proof}

\begin{remark}
The only place we used symmetry and closedness of $\mathcal{V}$ was the definition of module compositions using coends, and the definition of the category of enriched presheaves. Both of these notions are definable for non-symmetric $\mathcal{V}$, or even when the base of enrichment is a bicategory \cite{Street2005}, so we expect the above theorems to work at that level of generality as well.
\end{remark}

\begin{cor}
A cocomplete quantale $\mathcal{Q}$ such that any collection of its objects $\{A_i\}$ with an arrow
\begin{equation}
I\rightarrow \bigvee_i A_i
\end{equation}
contains an object $Z\in \{A_i\}$ with an arrow
\begin{equation}
I\rightarrow Z
\end{equation}
has the property that all small $\mathcal{Q}$-categories $\mathcal{E}$ are Cauchy complete.
\end{cor}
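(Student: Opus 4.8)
The plan is to obtain this from Proposition~\ref{prop:CC0CC}. A cocomplete quantale $\mathcal{Q}$ is, in particular, a cocomplete, locally small (each hom-set is empty or a singleton) symmetric monoidal closed category, so it is an admissible base; moreover, for any small $\mathcal{Q}$-category $\mathcal{E}$ the underlying ordinary category $\mathcal{E}_0$ is a preorder, so idempotents split there automatically --- the only endomorphism of an object $E$ is $\mathrm{id}_E$, since $\mathcal{E}_0(E,E)=\mathcal{Q}(I,\mathcal{E}(E,E))$ has at most one element, and it splits trivially through $E$. Hence it is enough to check that $\mathcal{Q}$ satisfies conditions (i) and (ii) of Proposition~\ref{prop:CC0CC}, after which the conclusion that every small $\mathcal{Q}$-category is Cauchy complete follows at once.

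Checking (i) is exactly where the hypothesis of the corollary enters, and it amounts essentially to a restatement of it. In a quantale every colimit is a join, with the comparison maps $A_i\to\bigvee_i A_i$ as coprojections, so a jointly regular epic family with codomain $B$ exhibits $B$ as $\bigvee_i A_i$. Applying the underlying functor $\mathcal{Q}(I,-)$ turns such a family into the family of functions $\mathcal{Q}(I,A_i)\to\mathcal{Q}(I,B)$; as the codomain $\mathcal{Q}(I,B)$ has at most one element, this family is jointly surjective precisely when inhabitedness of $\mathcal{Q}(I,B)$ forces some $\mathcal{Q}(I,A_i)$ to be inhabited --- that is, when an arrow $I\to\bigvee_i A_i$ entails an arrow $I\to Z$ for some $Z\in\{A_i\}$. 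This is our hypothesis verbatim.

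I expect condition (ii) to be the main obstacle. The map displayed there is a function between two sets each of cardinality at most one, hence a bijection exactly when, for all objects $A$ and $B$, an arrow $I\to A\otimes B$ exists if and only if arrows $I\to A$ and $I\to B$ both do. One implication is immediate: given $I\to A$ and $I\to B$, compose their tensor $I\otimes I\to A\otimes B$ with the unit isomorphism $I\xrightarrow{\sim}I\otimes I$. The converse demands that the unit object be \emph{prime with respect to $\otimes$} (each tensor factor receives a point from $I$ as soon as the tensor does), and this does \emph{not} follow formally from the $\vee$-primeness supplied by the hypothesis --- for instance, in the powerset quantale of a nontrivial abelian group $G$ one has $\{g\}\otimes\{g^{-1}\}=\{e\}=I$ while $I\not\leq\{g\}$, even though $I=\{e\}$ is $\vee$-prime. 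The natural remedy is to impose $\otimes$-primeness of $I$ as an additional hypothesis; it holds automatically whenever $I$ is the top object of $\mathcal{Q}$ (in particular for $\mathcal{R}_\bot$), and with it in hand Proposition~\ref{prop:CC0CC} applies word for word to finish the proof.
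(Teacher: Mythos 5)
Your reduction to Proposition \ref{prop:CC0CC} is exactly the route the paper intends: the corollary is stated there without any separate proof, and your verifications (a quantale is a locally small cocomplete symmetric monoidal closed poset; $\mathcal{E}_0$ is a preorder, so idempotents split trivially; condition (i) translates, via example \ref{ex:jRegPos}, into precisely the stated join-primeness of $I$) all match what the paper leaves implicit. The obstruction you flag at condition (ii) is a genuine gap, and it is in fact worse than you state: it is not merely that this proof strategy stalls, but that the corollary as printed fails without an extra hypothesis. Take $\mathcal{Q}=P(\mathbb{Z}/2)$, the powerset quantale of the two-element group $\{e,g\}$ (commutative, hence symmetric closed, and $I=\{e\}$ is completely join-prime, so the corollary's hypothesis holds). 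Let $\mathcal{E}$ be the one-object $\mathcal{Q}$-category with endohom $\{e\}$, and put $M=N=\{g\}$. The action inequalities are trivial, the unit holds because $I\subseteq\{g\}\otimes\{g\}=\{e\}$, the counit because $\{g\}\otimes\{g\}=\{e\}\subseteq\mathcal{E}(*,*)$, and the triangle identities are automatic since $\mathcal{Q}\text{-}\mathrm{Mod}$ is locally posetal; so $M$ is a Cauchy module, while the only representable module is $\{e\}\neq\{g\}$. Hence $\mathcal{E}$ is not Cauchy complete, and your proposed remedy --- adding $\otimes$-primeness of $I$, i.e. condition (ii) itself, as a hypothesis --- is not just natural but necessary (or the conclusion must be weakened).

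One small correction to your last paragraph: $\otimes$-primeness of $I$ does hold automatically when $I$ is the top object (then $A\otimes B\leq A\otimes I=A$ and likewise for $B$), but the parenthetical ``in particular for $\mathcal{R}_\bot$'' is off, since in $\mathcal{R}_\bot$ the terminal object is $\infty$, not the unit $0$. Condition (ii) nevertheless holds there for a different reason: $\bot$ is absorbing for $+$ and every object other than $\bot$ receives an arrow from $0$, so $0\leq A+B$ forces $A\neq\bot\neq B$ and hence $0\leq A$ and $0\leq B$. With that adjustment, your argument is a correct proof of the corrected statement, and it exposes a hypothesis the paper's corollary is genuinely missing.
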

\begin{example}
The motivating example $\mathcal{R}_\bot$ has this property.
\end{example}
\begin{cor}
If a cocomplete category $\mathcal{V}$ is Cartesian closed and
\begin{equation}
\mathcal{V}(1,-):\mathcal{V}\rightarrow \mathrm{Set}
\end{equation}
has a right adjoint, then $\mathcal{V}$ satisfies the requirements of proposition \ref{prop:CC0CC}. 
\end{cor}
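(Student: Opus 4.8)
The plan is to verify conditions (i) and (ii) of Proposition~\ref{prop:CC0CC}; the remaining hypotheses of that proposition are either in force here (cocompleteness, and local smallness --- the latter implicit in the very formation of $\mathcal{V}(1,-)$) or automatic, since a Cartesian closed category is symmetric monoidal closed with monoidal unit $I=1$ the terminal object and $\otimes=\times$. I would also record at the outset that, because $\mathcal{V}(1,-)$ has a right adjoint, it is cocontinuous; this is the only use I expect to make of the right adjoint.

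Condition (ii) I expect to fall straight out of the universal property of the binary product. With $I=1$ and $\otimes=\times$, the object $I\otimes I=1\times 1$ is again terminal, so the coherence isomorphism $\rho_I\colon I\to I\otimes I$ is the unique map $1\to 1\times 1$ (the diagonal $\langle 1,1\rangle$). Hence the composite displayed in (ii) sends a pair $(a,b)\in\mathcal{V}(1,A)\times\mathcal{V}(1,B)$ to $(a\times b)\circ\rho_I=\langle a,b\rangle\colon 1\to A\times B$, which is precisely the canonical bijection $\mathcal{V}(1,A)\times\mathcal{V}(1,B)\cong\mathcal{V}(1,A\times B)$. So (ii) amounts to unwinding the Cartesian structure.

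For condition (i) I would use that a jointly regular epic family $(f_i\colon A_i\to B)_i$ is exactly one whose copairing $\coprod_i A_i\to B$ (formed using the coproduct available by cocompleteness) is a regular epimorphism. Since $\mathcal{V}(1,-)$ is cocontinuous it preserves coproducts and coequalizers, hence regular epimorphisms, so it carries this data to a regular epimorphism $\coprod_i \mathcal{V}(1,A_i)\cong\mathcal{V}(1,\coprod_i A_i)\to\mathcal{V}(1,B)$ in $\mathbf{Set}$; and a regular epi in $\mathbf{Set}$ is a surjection, so $(\mathcal{V}(1,f_i))_i$ is jointly surjective, which is (i). With both conditions established, Proposition~\ref{prop:CC0CC} applies and yields the claim.

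The only step needing care is (i): one must check that the notion of ``jointly regular epic family'' used in Section~\ref{sec:epi} coincides with ``the copairing out of the coproduct is a regular epimorphism'' (equivalently, that such families are precisely the families of colimit injections of some colimit), after which preservation by the cocontinuous functor $\mathcal{V}(1,-)$ is immediate. I do not anticipate any genuine obstacle beyond this bit of bookkeeping, since everything else is forced by the Cartesian monoidal structure and by $1$ being terminal.
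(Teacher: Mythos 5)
Your proposal is correct and is essentially the argument the paper intends (the paper leaves it implicit, only exhibiting the right adjoint $G$): the right adjoint makes $\mathcal{V}(1,-)$ cocontinuous, which yields condition (i) via the proposition in section \ref{sec:epi} sending a jointly regular epi family to a regular epi copairing $\sum_i A_i\to B$ preserved into a surjection in $\mathrm{Set}$, while condition (ii) is exactly the universal property of the product with $I=1$ terminal. One small caveat: the equivalence you assert between ``jointly regular epi family'' and ``copairing is a regular epi'' holds unconditionally only in the direction you actually use (the converse requires the extra hypothesis on the families $F_{xy}$ stated in that proposition), so it should be phrased as an implication rather than an ``exactly''.
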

Denoting by $G$ the right adjoint we need a (natural) bijection
\begin{equation}\label{eq:PshIso}
 \mathcal{V}(A,GS)\cong\mathrm{\bf Set}(\mathcal{V}(I,A),S)\,.
\end{equation}
\begin{example}
For $\mathcal{V}=\mathrm{Set}$, $G=1_\mathrm{Set}$. More generally, if 
$\mathcal{V}=[\mathcal{C}^\mathrm{op},\mathrm{Set}]$ and $\mathcal{C}$ has a terminal object $1$ then
\begin{align}
(GS)C=\mathrm{Set}(\mathcal{C}(1,C),S)
\end{align}
functorially in $C$. The isomorphism (\ref{eq:PshIso}) follows from
\begin{align}
[\mathcal{C}^\mathrm{op},\mathrm{Set}]&(A,\mathrm{Set}(\mathcal{C}(1,-),S))\\
&\cong \int_{C\in\mathcal{C}} \mathrm{Set}(AC,\mathrm{Set}(\mathcal{C}(1,C),S))\\
&\cong \mathrm{Set}\left( \int^{C\in\mathcal{C}} AC\times \mathcal{C}(1,C),S\right)\\
&\cong \mathrm{Set}(A1,S)\\
&\cong \mathrm{Set}\left( [\mathcal{C}^\mathrm{op},\mathrm{Set}](\mathcal{C}(-,1),A),S\right)\\
&\cong \mathrm{Set}\left( [\mathcal{C}^\mathrm{op},\mathrm{Set}](1,A),S\right)
\end{align}
where $1$ in the last line denotes the terminal presheaf which is the monoidal unit in $[\mathcal{C}^\mathrm{op},\mathrm{Set}]$.
\end{example}
\begin{example}
For $\mathcal{V}=\mathrm{Cat}$, $GS$ is the chaotic category on the set $S$, because mapping into it is uniquely determined by the assignment on objects. More generally, for $\mathcal{V}=n\text{-}\mathrm{Cat}$, $GS$ is a the chaotic category seen as a locally discrete $n$-category (each hom is the terminal $(n-1)$-category).
\end{example}

In some cases condition (2) holds when the product is not Cartesian.
\begin{example}
$\mathrm{Gray}_{(l)}$ has the same objects and arrows as $2\text{-}\mathrm{Cat}$, but (lax) Gray tensor product, rather than the Cartesian one for the monoidal structure. Strict functors $1\rightarrow \mathcal{A}\otimes_{(l)}\mathcal{B}$ detect (pick) objects, which are pairs consisting of an object in $\mathcal{A}$ and an object in $\mathcal{B}$, hence satisfying condition (ii). 
\end{example}

\begin{prop} \label{prop:allCC}
Let $\mathcal{V}$ be a monoidal category. The following are equivalent:
\begin{enumerate}
	\item every $\mathcal{V}$-category $\mathcal{C}$ has a Cauchy complete underlying category $\mathcal{C}_0$,
	\item every monoid $(T,\mu,\eta)$ in $\mathcal{V}$ induces an idempotent-splitting monoid on the hom-set $\mathcal{V}(I,T)$.
\end{enumerate}
\end{prop}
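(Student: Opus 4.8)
The plan is to reduce the statement to a bookkeeping exercise around the suspension (delooping) construction $B$, which turns a monoid $(T,\mu,\eta)$ in $\mathcal{V}$ into the one-object $\mathcal{V}$-category $BT$ with unique hom-object $T$, composition $\mu$ and identity $\eta$. The key observation is that passing to underlying categories commutes with suspension: $(BT)_0$ is the one-object category whose hom-monoid is exactly $\mathcal{V}(I,T)$ equipped with the monoid structure induced by $T$ (composite $f\cdot g=\mu\circ(f\otimes g)\circ\lambda_I^{-1}$, unit $\eta$). Dually, for any $\mathcal{V}$-category $\mathcal{C}$ and object $c$, the endo-hom $\mathcal{C}(c,c)$ is a monoid in $\mathcal{V}$, and the monoid it induces on $\mathcal{V}(I,\mathcal{C}(c,c))$ is precisely the endomorphism monoid $\mathcal{C}_0(c,c)$. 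I would also spell out what ``idempotent-splitting monoid'' means concretely: an idempotent element $e$ of an ordinary monoid $M$ splits in the one-object category $BM$ if and only if $e=s\circ r$ for some $r,s\in M$ with $r\circ s=1_M$, so ``$M$ is idempotent-splitting'' says exactly that every idempotent of $M$ admits such a factorisation inside $M$ itself.

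With this in place the two implications are immediate. For $(1)\Rightarrow(2)$, given a monoid $T$ in $\mathcal{V}$ I apply hypothesis $(1)$ to the $\mathcal{V}$-category $BT$: its underlying category $(BT)_0$, being the delooping of the monoid $\mathcal{V}(I,T)$, has split idempotents by assumption, and since $BT$ has a single object there is no room to split through anything but that object; hence every idempotent of $\mathcal{V}(I,T)$ splits within $\mathcal{V}(I,T)$, which is $(2)$. For $(2)\Rightarrow(1)$, let $\mathcal{C}$ be a $\mathcal{V}$-category and $e\colon c\to c$ an idempotent in $\mathcal{C}_0$; regarding $e$ as an idempotent element of the monoid $\mathcal{V}(I,\mathcal{C}(c,c))=\mathcal{C}_0(c,c)$ and invoking $(2)$, we obtain $r,s\in\mathcal{C}_0(c,c)$ with $s\circ r=e$ and $r\circ s=1_c$. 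This is a splitting of $e$ in $\mathcal{C}_0$ — with the splitting object being $c$ itself — so idempotents split in $\mathcal{C}_0$, i.e. $\mathcal{C}_0$ is Cauchy complete.

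I do not expect a substantial obstacle; the argument is essentially an unwinding of definitions. The one point needing (mild) care is the identification of the monoid structure on $\mathcal{V}(I,\mathcal{C}(c,c))$ with ordinary composition in $\mathcal{C}_0$, together with the trivial-but-easy-to-overlook remark that a factorisation $e=s\circ r$ with $r\circ s=1$ \emph{through the same object} genuinely constitutes a splitting of $e$. This is also why the correct formulation in $(2)$ demands that each idempotent of $\mathcal{V}(I,T)$ split \emph{inside} $\mathcal{V}(I,T)$: Karoubi-completeness of the one-object category $BM$ would be vacuous, whereas the single-object constraint is precisely what lets $BT$ detect the stronger condition. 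Finally, I would note that none of the cocompleteness, closedness or symmetry hypotheses of the earlier results play any role here: the bare monoidal structure suffices to form $\mathcal{V}$-categories, their underlying categories, monoids in $\mathcal{V}$, and the induced monoids on hom-sets.
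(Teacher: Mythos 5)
Your argument is correct and is essentially the paper's own proof: $(1)\Rightarrow(2)$ via the one-object $\mathcal{V}$-category built on $(T,\mu,\eta)$, whose underlying category is the suspension of the monoid $\mathcal{V}(I,T)$ so that splitting there is forced through the unique object, and $(2)\Rightarrow(1)$ by viewing an idempotent $e\colon I\to\mathcal{C}(c,c)$ as an idempotent of the induced monoid on $\mathcal{V}(I,\mathcal{C}(c,c))=\mathcal{C}_0(c,c)$ and splitting it through $c$ itself. (Only the aside that ``Karoubi-completeness of the one-object category $BM$ would be vacuous'' is misphrased --- idempotent-splitting in $BM$ is precisely the nontrivial monoid-level condition, not a vacuous one --- but this does not affect the proof.)
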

\begin{proof}
	$(1\Rightarrow2)$ Consider a one-object category $\mathcal{C}$ with the endohom, multiplication and unit given by $(T,\mu,\eta)$. The underlying category is precisely the suspension of the monoid $\mathcal{V}(I,T)$, so idempotent-splitting in $\mathcal{C}_0$ is the same as idempotent-splitting in $\mathcal{V}(I,T)$.\newline
	$(2\Rightarrow1)$ Let $I\xrightarrow{e} \mathcal{C}(A,A)$ be an idempotent in $\mathcal{C}_0$. Since $\mathcal{C}(A,A)$ is a monoid in $\mathcal{V}$, $e$ is also an idempotent in the induced monoid on $\mathcal{V}(I,\mathcal{C}(A,A))$, and, by condition 2, it splits. 
\end{proof}

\begin{remark}
	Under condition 2, all idempotents in $\mathcal{C}_0$ split through the same object they live on. As a consequence, if an array of maps composes to the identity on an object $A$, then all intermediate objects are isomorphic to $A$.
\end{remark}
\begin{cor}
A monoidal category $\mathcal{V}$ satisfying conditions of the proposition \ref{prop:CC0CC}, and the second of \ref{prop:allCC}, has all small $\mathcal{V}$-categories Cauchy complete.
\end{cor}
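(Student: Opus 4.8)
The plan is to compose the two preceding propositions; there is essentially nothing else to do. First I would fix an arbitrary small $\mathcal{V}$-category $\mathcal{E}$. Since $\mathcal{V}$ satisfies the second condition of Proposition \ref{prop:allCC}, the equivalence established there tells us that every $\mathcal{V}$-category has a Cauchy complete underlying category, so in particular idempotents split in $\mathcal{E}_0$. Then, since $\mathcal{V}$ satisfies conditions (i) and (ii) of Proposition \ref{prop:CC0CC} --- together with the standing hypotheses of that proposition (cocomplete, locally small, symmetric monoidal closed), which are part of what ``the conditions of Proposition \ref{prop:CC0CC}'' names --- that proposition applies to $\mathcal{E}$ and concludes that $\mathcal{E}$ is Cauchy complete. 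As $\mathcal{E}$ was arbitrary, all small $\mathcal{V}$-categories are Cauchy complete.

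The only thing to verify is that the two hypothesis packages are mutually compatible and jointly satisfiable, which they are: Proposition \ref{prop:allCC} imposes no size or closedness constraints on $\mathcal{V}$, so a $\mathcal{V}$ meeting the hypotheses of Proposition \ref{prop:CC0CC} may freely also satisfy its condition (2), as the example $\mathcal{V}=\mathcal{R}_\bot$ (and more generally the cases treated by the earlier corollaries) illustrates. I expect no real obstacle here; the substance was discharged in the proofs of Propositions \ref{prop:CC0CC} and \ref{prop:allCC}, and the corollary is purely a matter of lining up the statements.

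Finally, combining Proposition \ref{prop:CC0CC} with the easy direction (the first proposition of Section \ref{sec:easy}), one obtains the sharper reformulation that, for such $\mathcal{V}$, a small $\mathcal{V}$-category is Cauchy complete precisely when idempotents split in its underlying category, and the extra hypothesis from Proposition \ref{prop:allCC} is exactly what forces that condition to hold unconditionally. Using the remark following Proposition \ref{prop:allCC}, one can moreover observe that the object $Z$ produced in the proof of Proposition \ref{prop:CC0CC} already represents the given Cauchy module $M$, since the idempotent $e\colon Z\to Z$ constructed there must split through $Z$ itself.
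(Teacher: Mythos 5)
Your proposal is correct and matches the intended argument exactly: the paper leaves this corollary's proof implicit, and it is precisely the composition you give --- condition (2) of Proposition \ref{prop:allCC} forces idempotents to split in $\mathcal{E}_0$ for every small $\mathcal{V}$-category $\mathcal{E}$, and Proposition \ref{prop:CC0CC} then yields Cauchy completeness of $\mathcal{E}$. Your closing observation that, by the remark after Proposition \ref{prop:allCC}, the idempotent $e\colon Z\to Z$ splits through (an object isomorphic to) $Z$ itself, so $Z$ already represents $M$, is a harmless bonus consistent with the paper.
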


\appendix

\section{Cauchy completeness}\label{sec:cc}

Here we summarize basic definitions and results related to the general theory of Cauchy completeness. The motivating example is in the introduction.

\begin{defn}\label{def:Cauchy}
A $\mathcal{V}$-module $M:\mathcal{B}\xmrightarrow{}\mathcal{C}$ is called Cauchy if it has a right adjoint in $\mathcal{V}\text{-}\mathrm{Mod}.$ 
\end{defn}

\begin{prop}\cite{Street1983}
A $\mathcal{V}$-module $M$ is Cauchy if and only if all $M$-weighted colimits are absolute.
\end{prop}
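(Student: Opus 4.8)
The plan is to prove both implications of the equivalence, essentially following \cite{Street1983}. It is convenient to reduce to the one-variable case, in which the module is a $\mathcal{V}$-presheaf $M:\mathcal{I}\xmrightarrow{}\mathcal{B}$ used as a weight: a general module is a left adjoint in $\mathcal{V}\text{-}\mathrm{Mod}$ iff each of its unary restrictions is a Cauchy presheaf (the right adjoints then assemble $\mathcal{V}$-functorially, automatically, since the Cauchy completion is a full sub-$\mathcal{V}$-category), and the corresponding weighted colimits are computed pointwise from those of the restrictions. The link between the two languages is the defining isomorphism $\mathcal{A}(M\star S,a)\cong[\mathcal{B}^{\mathrm{op}},\mathcal{V}]\big(M,\mathcal{A}(S-,a)\big)$ for a $\mathcal{V}$-functor $S:\mathcal{B}\to\mathcal{A}$, together with the facts that module composition is itself a weighted colimit and that presheaf $\mathcal{V}$-categories have all weighted colimits.

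For $(\Rightarrow)$, suppose $M\dashv N$ in $\mathcal{V}\text{-}\mathrm{Mod}$, with unit $\eta$ and counit $\epsilon$, and let $M\star S$ exist for some $S:\mathcal{B}\to\mathcal{A}$. Combining the defining isomorphism with the adjunction gives a $\mathcal{V}$-natural isomorphism
\[
\mathcal{A}(M\star S,a)\;\cong\;N\circ\mathcal{A}(S-,a)\;=\;\int^{b}N(b)\otimes\mathcal{A}(Sb,a),
\]
whose two directions are assembled, respectively, from the universal cocone of $M\star S$ and from the $2$-cells $\eta,\epsilon$, which live entirely on the $M,N$ side. Now for any $\mathcal{V}$-functor $F:\mathcal{A}\to\mathcal{A}'$: the assignment $F\mapsto F_{*}$ preserves module composition and whiskering, so $F$ carries the universal cocone of $M\star S$ to a cocone for $FS$, while leaving $\eta,\epsilon$ untouched; transporting the displayed isomorphism along $F$ therefore yields the same isomorphism with $F(M\star S)$, $FS$, $\mathcal{A}'$ in place of $M\star S$, $S$, $\mathcal{A}$. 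That is exactly the assertion that $F(M\star S)\cong M\star(FS)$, so the colimit is absolute.

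For $(\Leftarrow)$, suppose every $M$-weighted colimit is absolute. Feed this the universal diagram: the colimit $M\star Y_{\mathcal{B}}$ of the Yoneda embedding $Y_{\mathcal{B}}:\mathcal{B}\to[\mathcal{B}^{\mathrm{op}},\mathcal{V}]$ exists, the target being cocomplete, and by the co-Yoneda density formula it is $M$ itself. Absoluteness says this colimit is preserved by \emph{every} $\mathcal{V}$-functor out of $[\mathcal{B}^{\mathrm{op}},\mathcal{V}]$; testing against the representable functors $[\mathcal{B}^{\mathrm{op}},\mathcal{V}](-,P)$ and against arbitrary cocontinuous functors forces the represented functor $[\mathcal{B}^{\mathrm{op}},\mathcal{V}](M,-)$ to preserve all colimits, i.e.\ $M$ is a small-projective (``Cauchy'') object. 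The standard consequence is a module $N:\mathcal{B}\xmrightarrow{}\mathcal{I}$ with a natural isomorphism $[\mathcal{B}^{\mathrm{op}},\mathcal{V}](M,P)\cong N\circ P$; specialising to representable $P$ and to $P=M$, and chasing identities, produces candidate unit and counit, whose triangle identities are then forced by the same universal property. Hence $M$ is Cauchy.

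The hard part is $(\Leftarrow)$: converting the purely external hypothesis ``preserved by all $\mathcal{V}$-functors'' into the internal algebraic data of a right adjoint module with coherent unit and counit. One must choose enough test functors — absoluteness against the ``large'' colimit-preserving functors is precisely what forces small-projectivity — and then check that the extracted $N$ is genuinely $\mathcal{V}$-functorial and that $\eta,\epsilon$ satisfy \emph{both} triangle identities rather than just one. The remaining steps — the reduction to the one-variable case, and verifying that the displayed isomorphism and its transport along $F$ hold at the enriched rather than the underlying-set level — are routine coend bookkeeping, the only real pitfall there being the variances.
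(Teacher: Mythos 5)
The paper does not actually prove this proposition --- it is quoted from Street's \emph{Absolute colimits in enriched categories} --- so your attempt has to be judged against the standard argument. Your $(\Rightarrow)$ direction is essentially that argument and is fine: from $M\dashv N$ one gets $\mathcal{A}(M\star S,a)\cong N\circ\mathcal{A}(S-,a)$, with the two directions of the isomorphism built from the universal cocone and from $\eta,\epsilon$, and since the latter data live on the module side they transport along any $\mathcal{V}$-functor $F$, giving $F(M\star S)\cong M\star(FS)$. The reduction to the one-object-domain case is also a standard (and correct) preliminary.

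The $(\Leftarrow)$ direction, however, has a genuine gap at the step ``testing against the representable functors $[\mathcal{B}^{\mathrm{op}},\mathcal{V}](-,P)$ and against arbitrary cocontinuous functors forces $[\mathcal{B}^{\mathrm{op}},\mathcal{V}](M,-)$ to preserve all colimits.'' First, $[\mathcal{B}^{\mathrm{op}},\mathcal{V}](-,P)$ is contravariant, hence not a $\mathcal{V}$-functor out of $[\mathcal{B}^{\mathrm{op}},\mathcal{V}]$, so absoluteness says nothing about it; second, cocontinuous functors preserve \emph{every} colimit, so testing against them is vacuous; third, and most importantly, the hypothesis only concerns colimits weighted by $M$, so no choice of test functors can directly deliver preservation by $[\mathcal{B}^{\mathrm{op}},\mathcal{V}](M,-)$ of colimits with \emph{arbitrary} weights --- small-projectivity is a consequence of Cauchyness, not a stepping stone you can reach first this way (and the detour through small-projectivity would anyway import the completeness/closedness hypotheses of the neighbouring Kelly proposition, which Street's theorem does not need). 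The repair is short and is the actual content of the proof: apply absoluteness of the single colimit $M\cong M\star Y_{\mathcal{B}}$ to the single covariant representable $F=[\mathcal{B}^{\mathrm{op}},\mathcal{V}](M,-)$. This gives
\begin{equation*}
[\mathcal{B}^{\mathrm{op}},\mathcal{V}](M,M)\;\cong\;\int^{b}M(b)\otimes N(b),\qquad N(b):=[\mathcal{B}^{\mathrm{op}},\mathcal{V}]\bigl(M,\mathcal{B}(-,b)\bigr),
\end{equation*}
so the element $1_M$ provides the unit $I\to\int^{b}M(b)\otimes N(b)$, evaluation $N(b')\otimes M(b)\to\mathcal{B}(b,b')$ provides the counit, and the triangle identities follow from Yoneda and the coend calculus. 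With that substitution your outline becomes the standard proof; as written, the extraction of the right adjoint is not justified.
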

More on absolute colimits in $(\mathrm{Set}\text{-})$categories can be found in \cite{Pare1971}. Absolute weights for enrichment in a bicategory were further examined in \cite{Garner2014}.

\begin{prop}\cite{Kelly1982} For symmetric closed complete and cocomplete $\mathcal{V}$,
a $\mathcal{V}$-module $M:\mathcal{I}\xmrightarrow{}\mathcal{C}$ is Cauchy if and only if it is small-projective, that is, the representable functor
\begin{equation}
[\mathcal{C}^\mathrm{op},\mathcal{V}](M,-):[\mathcal{C}^\mathrm{op},\mathcal{V}]\rightarrow \mathcal{V}
\end{equation}
preserves small colimits.
\end{prop}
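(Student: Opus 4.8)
This is classical (see \cite{Kelly1982}); here is the route I would take, using the apparatus already set up — modules, their coend composition, the bicategory $\mathcal{V}\text{-}\mathrm{Mod}$, and the identification of modules $\mathcal{I}\xmrightarrow{}\mathcal{C}$ with objects of $[\mathcal{C}^\mathrm{op},\mathcal{V}]$.

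\emph{Forward direction.} Let $M$ be Cauchy with right adjoint $N:\mathcal{C}\nrightarrow\mathcal{I}$, i.e. a copresheaf $N:\mathcal{C}\to\mathcal{V}$, with unit $\eta:I\to\int^{X}M(X)\otimes N(X)$ and counit $\epsilon_{X,Y}:N(Y)\otimes M(X)\to\mathcal{C}(X,Y)$. An adjunction in a bicategory is preserved by postcomposition, and module composition is $\mathcal{V}$-functorial, so postcomposing with $M$ gives a $\mathcal{V}$-adjunction between $\mathcal{V}=\mathcal{V}\text{-}\mathrm{Mod}(\mathcal{I},\mathcal{I})$ and $[\mathcal{C}^\mathrm{op},\mathcal{V}]=\mathcal{V}\text{-}\mathrm{Mod}(\mathcal{I},\mathcal{C})$, whose left leg sends $V\in\mathcal{V}$ to the presheaf $X\mapsto M(X)\otimes V$ and whose right leg sends $Q$ to $\int^{X}Q(X)\otimes N(X)$; evaluating the resulting hom-object isomorphism at $V=I$ yields a natural isomorphism
\[
[\mathcal{C}^\mathrm{op},\mathcal{V}](M,Q)\;\cong\;\int^{X}Q(X)\otimes N(X)\,.
\]
The right-hand functor of $Q$ is visibly cocontinuous — it is a coend over $X$ of the functors $Q\mapsto Q(X)\otimes N(X)$, each a composite of the colimit-preserving evaluation $\mathrm{ev}_X$ (colimits of presheaves being pointwise) with the left adjoint $(-)\otimes N(X)$, and coends commute with colimits — so $[\mathcal{C}^\mathrm{op},\mathcal{V}](M,-)$ preserves small colimits. (In the $\mathcal{R}_\bot$ case this isomorphism reduces to the identification of a Cauchy $M$ with a representable module, as established in the proof above.)

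\emph{Converse.} Assume $[\mathcal{C}^\mathrm{op},\mathcal{V}](M,-)$ preserves small colimits. Define the copresheaf $N$ by restriction along Yoneda, $N(c):=[\mathcal{C}^\mathrm{op},\mathcal{V}]\bigl(M,\mathcal{C}(-,c)\bigr)$. Applying the cocontinuous functor $[\mathcal{C}^\mathrm{op},\mathcal{V}](M,-)$ to the density presentation $Q\cong\int^{X}Q(X)\otimes\mathcal{C}(-,X)$ recovers the isomorphism $[\mathcal{C}^\mathrm{op},\mathcal{V}](M,Q)\cong\int^{X}Q(X)\otimes N(X)$. I would then take the counit $\epsilon_{X,Y}:N(Y)\otimes M(X)\to\mathcal{C}(X,Y)$ to be the $X$-component of the evaluation transformation into $\mathcal{C}(-,Y)$, and the unit $\eta:I\to\int^{X}M(X)\otimes N(X)$ to be the image of $\mathrm{id}_M$ under $[\mathcal{C}^\mathrm{op},\mathcal{V}](M,M)\cong\int^{X}M(X)\otimes N(X)$ (density applied to $M$, plus cocontinuity), and then verify the two triangle identities for $(\eta,\epsilon)$ in $\mathcal{V}\text{-}\mathrm{Mod}$ by chasing the coend presentations together with naturality of the evaluation.

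\emph{Main obstacle.} The conceptual content is light; the friction is bookkeeping. The one genuinely computational step is the triangle-identity verification in the converse. A subsidiary point worth flagging is that the hom isomorphisms above must be read at the enriched (hom-object) level, not merely on underlying sets — this is automatic since module composition is a $\mathcal{V}$-functor, so the bicategorical adjunction upgrades to a $\mathcal{V}$-adjunction, but it should be said. One could bypass the converse by invoking the earlier characterization of Cauchy modules as absolute-colimit weights and matching ``absolute'' against ``small-projective'' via the universal property of $[\mathcal{C}^\mathrm{op},\mathcal{V}]$ as the free $\mathcal{V}$-cocompletion of $\mathcal{C}$; since the excerpt lists that characterization as a separate result, the direct construction of $N$ above is the self-contained option.
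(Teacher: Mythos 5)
Your argument is essentially the standard one: the paper offers no proof of this proposition, simply citing \cite{Kelly1982}, and your route --- the induced adjunction $M\circ(-)\dashv N\circ(-)$ between $\mathcal{V}\text{-}\mathrm{Mod}(\mathcal{I},\mathcal{I})\simeq\mathcal{V}$ and $\mathcal{V}\text{-}\mathrm{Mod}(\mathcal{I},\mathcal{C})\simeq[\mathcal{C}^\mathrm{op},\mathcal{V}]$ giving $[\mathcal{C}^\mathrm{op},\mathcal{V}](M,Q)\cong\int^{X}Q(X)\otimes N(X)$ for the forward direction, and, conversely, $N(c):=[\mathcal{C}^\mathrm{op},\mathcal{V}](M,\mathcal{C}(-,c))$ with evaluation as counit and the image of $1_M$ as unit --- is exactly the cited proof. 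The two points you leave as bookkeeping (the triangle identities, and reading the isomorphisms at the hom-object rather than underlying-set level, which you correctly note follows from $\mathcal{V}$-functoriality of module composition) are indeed routine, so I see no gap.
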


\begin{defn}\label{def:Conv}
A right $\mathcal{C}$-module $M:\mathcal{B}\xmrightarrow{}\mathcal{C}$ is called convergent if there is a $\mathcal{V}$-functor $F:\mathcal{B}\rightarrow \mathcal{C}$ such that $M\cong F_*:= \mathcal{C}(-,F-)$.
\end{defn}
When $\mathcal{B}=\mathcal{I}$, $M$ being convergent is equivalent to $M$ being representable in the usual sense.

\begin{defn}\label{def:ccCat}
A $\mathcal{V}$-category $\mathcal{C}$ is Cauchy complete if all Cauchy modules into $\mathcal{C}$ are representable.
\end{defn}

\begin{prop}
A $\mathcal{V}$-category $\mathcal{C}$ is Cauchy complete if and only if it has all absolute-weighted colimits.
\end{prop}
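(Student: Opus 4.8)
The plan is to reduce both implications to the earlier characterisation of Cauchy modules (the quoted proposition of Street) together with bookkeeping in the bicategory of $\mathcal{V}$-modules. Throughout, ``$\mathcal{C}$ has all absolute-weighted colimits'' is read as: for every $\mathcal{V}$-category $\mathcal{K}$, every absolute weight $W\colon\mathcal{K}^{\mathrm{op}}\to\mathcal{V}$ --- equivalently, by that proposition, every Cauchy module $W\colon\mathcal{I}\xmrightarrow{}\mathcal{K}$ --- and every diagram $D\colon\mathcal{K}\to\mathcal{C}$, there exists a colimit $W\star D$ (the object with $\mathcal{C}(W\star D,Y)\cong[\mathcal{K}^{\mathrm{op}},\mathcal{V}](W,\mathcal{C}(D-,Y))$). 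Two facts will be used without comment: a convergent module $D_{*}=\mathcal{C}(-,D-)$ is Cauchy, with right adjoint $D^{*}=\mathcal{C}(D-,-)$; and a composite of Cauchy modules is Cauchy, since right adjoints compose.

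For ``Cauchy complete $\Rightarrow$ has all absolute-weighted colimits'', fix an absolute weight $W\colon\mathcal{I}\xmrightarrow{}\mathcal{K}$ and a diagram $D\colon\mathcal{K}\to\mathcal{C}$. By the quoted proposition $W$ is a Cauchy module, so the composite $D_{*}\circ W\colon\mathcal{I}\xmrightarrow{}\mathcal{C}$ is again Cauchy. Since $\mathcal{C}$ is Cauchy complete, $D_{*}\circ W$ is convergent, i.e.\ $D_{*}\circ W\cong\mathcal{C}(-,L)$ for some $L\in\mathcal{C}$. A routine coend (Fubini/Yoneda) computation then yields $\mathcal{C}(L,Y)\cong[\mathcal{C}^{\mathrm{op}},\mathcal{V}](D_{*}\circ W,\mathcal{C}(-,Y))\cong[\mathcal{K}^{\mathrm{op}},\mathcal{V}](W,\mathcal{C}(D-,Y))$ naturally in $Y$, which is exactly the universal property of $W\star D$; hence $W\star D$ exists, with value $L$. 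I would then note, if desired, that this colimit is automatically absolute (again by the quoted proposition applied to $W$), but only existence is required.

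For the converse, let $M\colon\mathcal{B}\xmrightarrow{}\mathcal{C}$ be Cauchy; I want it convergent. It suffices to treat $\mathcal{B}=\mathcal{I}$: for general $\mathcal{B}$ and an object $b$, the composite $M\circ b_{*}\colon\mathcal{I}\xmrightarrow{}\mathcal{C}$ (with $b_{*}=\mathcal{B}(-,b)$) is Cauchy, so the $\mathcal{I}$-case supplies representing objects $Fb$, and the right $\mathcal{B}$-action on $M$ promotes $b\mapsto Fb$ to a $\mathcal{V}$-functor $F$ with $M\cong F_{*}$ --- a standard assembly from the module axioms. So suppose $M\colon\mathcal{I}\xmrightarrow{}\mathcal{C}$ is Cauchy; by the quoted proposition it is an absolute weight, so by hypothesis the colimit $M\star 1_{\mathcal{C}}$ of the identity diagram exists, and, $M$ being Cauchy, it is an absolute colimit. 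Being absolute, it is preserved by every representable $\mathcal{C}(Z,-)\colon\mathcal{C}\to\mathcal{V}$, so $\mathcal{C}(Z,M\star 1_{\mathcal{C}})\cong M\star\mathcal{C}(Z,-)\cong\int^{C}MC\otimes\mathcal{C}(Z,C)\cong MZ$ by the co-Yoneda lemma, naturally in $Z$. Thus $M$ is represented by $M\star 1_{\mathcal{C}}$, and $\mathcal{C}$ is Cauchy complete.

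The only genuinely delicate point is the passage between ``$M$ is a representable presheaf'' --- a statement about maps \emph{into} the representing object --- and ``$W\star D$ is a colimit'' --- a statement about maps \emph{out} of it. In the first implication this translation is unconditional (the coend computation above). In the second it is precisely \emph{absoluteness} of the colimit, forcing each representable $\mathcal{C}(Z,-)$ to preserve $M\star 1_{\mathcal{C}}$, that makes the co-Yoneda identification go through; without it one could not deduce representability from mere existence of the colimit. Everything else --- matching the variances of the modules involved and the reduction from arbitrary $\mathcal{B}$ to $\mathcal{I}$ --- is routine.
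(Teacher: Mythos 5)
Your argument is correct and is the standard one: the paper itself states this proposition without proof (Appendix \ref{sec:cc} only summarizes known results), and your two directions --- representing the composite Cauchy module $D_*\circ W$ to obtain $W\star D$, and, conversely, forming the absolute colimit $M\star 1_{\mathcal{C}}$, applying its preservation by each representable $\mathcal{C}(Z,-)$ and co-Yoneda to get $\mathcal{C}(Z,M\star 1_{\mathcal{C}})\cong MZ$ --- constitute exactly the classical proof the paper implicitly relies on, including the routine reduction from a general domain $\mathcal{B}$ to $\mathcal{I}$. The only caveats are the standing hypotheses used elsewhere in the paper (symmetric monoidal closed, (co)complete $\mathcal{V}$ and smallness of the domain categories) needed for the coends, ends and module composites in your computations to exist.
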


\section{Familial epiness}\label{sec:epi}

In this section we explore the notion of jointly epi families and how it can be extended to extremal, strong and regular epi families. The letter  $\mathcal{V}$ denotes an ordinary category. Most of the concepts here are taken from \cite{Street1984}.

\begin{defn}
A family of maps $\{A_i\xrightarrow{w_i}B\}_{i\in I}$
in $\mathcal{V}$ is {\it jointly epi} if any two maps $B\xrightarrow{f}C$ and $B\xrightarrow{g}C$ satisfying, for all $i$,
$
f\circ w_i=g\circ w_i
$
implies $f=g$.
\end{defn}

\begin{defn}
A family of maps $\{A_i\xrightarrow{w_i}B\}_{i\in I}$ 
in $\mathcal{V}$ is {\it jointly extremal epi} if it is jointly epi and satisfies the {\it invertible mono} condition - that any mono $m$ through which all $w_i$ factor is necessarily an isomorphism.
\end{defn}

\begin{defn}
A family of maps $\{A_i\xrightarrow{w_i}B\}_{i\in I}$
in $\mathcal{V}$ is {\it jointly strong epi} if it is jointly epi and satisfies the {\it diagonal fill in} condition - that any map $B\xrightarrow{g}D$, any mono $C\xrightarrow{m}D$, and any family of maps $\{A_i\xrightarrow{f_i}C\}_{i\in I}$ such that $m\circ f_i=g\circ w_i$, there is a unique diagonal filler $B\xrightarrow{d}C$ such that all triangles commute.
\end{defn}
\begin{remark}
As in the single epi case, if equalizers exist in $\mathcal{V}$, the condition of being jointly epi in order to be jointly extremal/strong, follows from the invertible-mono/diagonal-fill-in condition.
\end{remark}
\begin{remark}
As in the single epi case, any jointly strong epi family is jointly extremal epi, and in the presence of pullbacks, every jointly extremal epi family is a jointly strong epi family.
\end{remark}

\begin{defn}
A {\it relation} $R$ on a family $\{A_i\}_{i\in I}$ of objects in $\mathcal{V}$ is given by a set $R_{i,j}$ of spans between $A_i$ and $A_j$, for each $i$ and $j$. We use $R$ to denote the (disjoint) union of all $R_{i,j}$. A {\it quotient} of $R$ is a family $\{A_i\xrightarrow{w_i} B\}_{i\in I}$ that is (part of) a colimit cone for the diagram consisting of objects $\{A_i\}_{i\in I}$ and spans in $R$ between them. Explicitly, for each span
\begin{equation}\label{eq:spanReg}
A_i\xleftarrow{x}D\xrightarrow{y} A_j
\end{equation}
in $R_{i,j}$, the square
\begin{equation}\label{diag:commSpan}
\begin{tikzpicture}[baseline=(current  bounding  box.center)]
% % % left hand side
\def \strx {1.2};
\def \stry {0.8};
%objects
\node (A1) at (-1*\strx,1*\stry) {$D$};
\node (A2) at (1*\strx,1*\stry) {$A_i$};
\node (B1) at (-1*\strx,-1*\stry) {$A_j$};
\node (B2) at (1*\strx,-1*\stry) {$B$};
%arrows
\path[->,font=\scriptsize,>=angle 90]
(A1) edge node[above] {$x$} (A2)
(B1) edge node[below] {$w_j$} (B2)
(A1) edge node[left] {$y$} (B1)
(A2) edge node[right] {$w_i$} (B2);
\end{tikzpicture}
\end{equation}
commutes, and the quotient is a universal family with this property.
A {\it kernel} of an arbitrary family $\{A_i\xrightarrow{w_i} B\}_{i\in I}$, denoted $\mathrm{Ker}(\{w_i\})$, is the relation containing all spans of the form (\ref{eq:spanReg}) satisfying (\ref{diag:commSpan}).
\end{defn}
If a family $\{A_i\xrightarrow{w_i} B\}_{i\in I}$ quotients some relation, then it quotients its kernel. That is because adding more spans (such that (\ref{diag:commSpan}) commutes) to the colimit diagram does not change the colimit.
\begin{defn}
A family of maps $\{A_i\xrightarrow{w_i}B\}_{i\in I}$ 
in $\mathcal{V}$ is {\it jointly regular epi} if it 
is a quotient for some relation.
\end{defn}

\begin{example}
Cowedge components of a coend form a regular epi family. A functor $T:\mathcal{C}^\text{op}\times\mathcal{C}\rightarrow \mathcal{V}$ has a coend if and and only if  the relation on $\{T(C,C)\}_{C\in \mathcal{C}}$ formed by spans
\begin{equation}
T(C,C)\xleftarrow{T(f,C)}T(C',C)\xrightarrow{T(C',f)}T(C',C')
\end{equation}
for each $f:C\rightarrow C'$, has a quotient, and they are the same (up to isomorphism). This is a reformulation of obtaining a coend \cite{Lane1998} via a colimit.
\end{example}
\begin{example}
The same is true for an enriched coend. Let $\mathcal{V}$ be a locally small symmetric monoidal closed category, and $\mathcal{C}$ a $\mathcal{V}$-category. An enriched functor $T:\mathcal{C}^\text{op}\otimes\mathcal{C}\rightarrow \mathcal{V}$ can equivalently be seen as an endomodule on $\mathcal{C}$, given by actions
\begin{align}
\mathcal{C}(C',C'')\otimes T(C,C')&\xrightarrow{\lambda^{C'}_{CC''}}
T(C,C'')\\
T(C',C'')\otimes \mathcal{C}(C,C')&\xrightarrow{\rho^{C'}_{CC''}}
T(C,C'')\,.
\end{align}
It has a coend, defined as the quotient of the relation on $\{T(C,C)\}_{C\in \mathcal{C}}$ formed by spans
\begin{equation}
T(C,C)\xleftarrow{\rho^{C'}_{CC}\circ \sigma}\mathcal{C}(C,C')\otimes T(C',C)\xrightarrow{\lambda^{C}_{C'C'}}T(C',C')
\end{equation}
for each pair of objects $C,C'$. Note that this quotient is isomorphic to the one quotienting the relation formed from
\begin{equation}
T(C,C)\xleftarrow{\rho^{C'}_{CC}} T(C',C)\otimes\mathcal{C}(C,C')\xrightarrow{\lambda^{C}_{C'C'}\circ \sigma}T(C',C')
\end{equation}
since $\sigma$ is an isomorphism of spans constituting the colimit diagrams. 
\end{example}
\begin{example}\label{ex:moduleReg}
Module composition cocone components form regular epi family. Let $\mathcal{C},\mathcal{D},\mathcal{E}$ be small $\mathcal{V}$-enriched categories for a cocomplete monoidal category $\mathcal{V}$, together with a pair of modules $\mathcal{C}\xmrightarrow{M}\mathcal{D}\xmrightarrow{N}\mathcal{E}$. Fix objects $C\in \mathcal{C}$, $E\in\mathcal{E}$, and consider the relation on $\{ M(D,C)\otimes N(E,D)\}_{D\in \mathcal{D}}$ consisting of spans as in (\ref{eq:modComp}).
\begin{equation}\label{eq:modComp}
\begin{tikzpicture}[baseline=(current  bounding  box.center)]
% % % left hand side
\def \strx {1.2};
\def \stry {0.8};
%objects
\node (A1) at (0*\strx,2*\stry) {$M(D',C)\otimes \mathcal{D}(D,D')\otimes N(E,D)$};
\node (A2) at (-2*\strx,-1*\stry) {$M(D,C)\otimes N(E,D) $};
\node (B1) at (2*\strx,-1*\stry) {$M(D',C)\otimes N(E,D')$};
%arrows
\path[->,font=\scriptsize,>=angle 90]
(A1) edge node[above left] {$\rho^{(M)}_{CDD'}\otimes 1$} (A2)
(A1) edge node[above right] {$1\otimes\lambda^{(N)}_{DD'E}$} (B1);
\end{tikzpicture}
\end{equation}
Its quotient is precisely the definition of the composite module, with quotient maps
\begin{equation}
M(D,C)\otimes N(E,D)\xrightarrow{w^{CE}_D}(N\circ_{\mathcal{D}}M)(E,C)\,.
\end{equation}
In particular, when $\mathcal{V}$ is symmetric closed, this is isomorphic to the enriched coend
\begin{equation}
\int^{D\in \mathcal{D}}M(D,C)\otimes N(E,D)\,.
\end{equation}
\end{example}

\begin{remark}
If $J=1$ the above definitions reduce to the definitions of (extremal, strong or regular \cite{Kelly1969}) epimorphisms. Furthermore, if $\mathcal{V}$ has coproducts, the induced map $\sum_i A_i \xrightarrow{w} B$ is extremal/strong epi if and only if $\{w_i\}_{i\in I}$ is a jointly extremal/strong epi family.  
\end{remark}
The regular case is examined in
\begin{prop}
In the category $\mathcal{V}$ with coproducts, a jointly regular epi family $\{w_i\}_{i\in I}$ induces a regular epi map $\sum_i A_i \xrightarrow{w} B$. The converse is true
 if for all parallel pairs $x,y:D\rightarrow \sum_i A_i$ the family 
 \begin{align}
 F_{xy}=\{p:P\rightarrow D|&\exists i,j,p_i:P\rightarrow A_i,
	 p_j:P\rightarrow A_j,\;\; \text{such that} \\
	 &x\circ p=\theta_i\circ p_i \;\;\text{and}\;\; y\circ p=\theta_i\circ p_j\}
 \end{align}
 is jointly epi.
\end{prop}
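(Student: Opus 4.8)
The plan is to prove the two implications separately, in each case rephrasing ``jointly regular epi family'' as an ordinary coequalizer statement for the copaired map $w=[w_i]:\sum_i A_i\to B$, via the bijection between maps $\sum_i A_i\to C$ and families of maps $\{A_i\to C\}_{i\in I}$.

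For the forward implication, suppose $\{w_i\}$ is a quotient of a relation $R$. I would form the coproduct $\sum_{s\in R}D_s$ over the apices of all spans $s=(A_i\xleftarrow{x_s}D_s\xrightarrow{y_s}A_j)$ in $R$ (this is where ``$\mathcal{V}$ has coproducts'' is used), and assemble two maps $u,v:\sum_{s\in R}D_s\to\sum_i A_i$ whose components on the summand $D_s$ are $\theta_i\circ x_s$ and $\theta_j\circ y_s$ respectively. Under $f\mapsto\{f_i:=f\circ\theta_i\}$, a map $f:\sum_i A_i\to C$ satisfies $f\circ u=f\circ v$ exactly when $f_i\circ x_s=f_j\circ y_s$ for every $s\in R$, which is precisely the cocone condition in the definition of a quotient of $R$. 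Hence the universal property of the quotient $\{w_i\}$ is verbatim the universal property of $\mathrm{coeq}(u,v)$, so $w=\mathrm{coeq}(u,v)$ is a regular epi.

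For the converse, suppose $w$ is a regular epi, fix a presentation $w=\mathrm{coeq}(x,y)$ with $x,y:D\to\sum_i A_i$ a parallel pair, and invoke the hypothesis for this very pair: $F_{xy}$ is jointly epi. Since $w$ is in particular epi, $\{w_i\}$ is jointly epi (from the fact that $g\circ w=g'\circ w$ holds if and only if $g\circ w_i=g'\circ w_i$ for all $i$), which already gives the uniqueness clause of ``$\{w_i\}$ is the quotient of $\mathrm{Ker}(\{w_i\})$''. For existence, take any family $\{f_i:A_i\to C\}$ compatible with every span in $\mathrm{Ker}(\{w_i\})$, set $f=[f_i]$, and try to factor $f$ through $w$, that is, show $f\circ x=f\circ y$. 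The crux: given $p\in F_{xy}$ with $x\circ p=\theta_i\circ p_i$ and $y\circ p=\theta_j\circ p_j$, applying $w$ and using $w\circ x=w\circ y$ yields $w_i\circ p_i=w_j\circ p_j$, so the span $(A_i\xleftarrow{p_i}P\xrightarrow{p_j}A_j)$ lies in $\mathrm{Ker}(\{w_i\})$; compatibility of $\{f_i\}$ then forces $f_i\circ p_i=f_j\circ p_j$, that is, $f\circ x\circ p=f\circ y\circ p$. As this holds for every $p\in F_{xy}$ and $F_{xy}$ is jointly epi, $f\circ x=f\circ y$, so $f$ factors through the coequalizer $w$; thus $\{w_i\}$ is the quotient of $\mathrm{Ker}(\{w_i\})$, hence jointly regular epi.

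The step I expect to be the main obstacle is exactly the ``crux'' in the converse: the kernel relation $\mathrm{Ker}(\{w_i\})$ is assembled from spans between the $A_i$, while the coequalizer presentation of $w$ lives over the coproduct $\sum_i A_i$, so one needs the family $F_{xy}$ of maps into $D$ that ``resolve'' both $x$ and $y$ through coproduct injections to be rich enough (jointly epi) to mediate between the two pictures. Two routine checks remain: the degenerate case $I=\emptyset$, where $\sum_i A_i$ and $B$ are initial and $w$ is an identity so both implications are trivial; and reading the last condition defining $F_{xy}$ as $y\circ p=\theta_j\circ p_j$ (so that the codomain $A_j$ of $p_j$ matches).
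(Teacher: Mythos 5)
Your proof is correct, and the computation you call the crux of the converse is exactly the paper's own diagram chase, but the packaging of both directions is genuinely different. In the forward direction the paper never forms a coequalizing pair: it shows that $\mathrm{Ker}(w)\subset\mathrm{Ker}(f)$ implies $\mathrm{Ker}(\{w_i\})\subset\mathrm{Ker}(\{f_i\})$ and then factors $f$ through $w$ using the quotient property of $\{w_i\}$, so it only ever uses the given coproduct $\sum_i A_i$; you instead copair the apices of the presenting relation into a single parallel pair $u,v:\sum_{s\in R}D_s\to\sum_i A_i$ and identify $w$ with $\mathrm{coeq}(u,v)$, which spends the coproduct hypothesis once more but delivers the regular-epi conclusion more directly, as an honest coequalizer of one pair rather than via the kernel (strict-epi) formulation. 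In the converse the paper takes an arbitrary family with $\mathrm{Ker}(\{w_i\})\subset\mathrm{Ker}(\{f_i\})$, proves $\mathrm{Ker}(w)\subset\mathrm{Ker}(f)$ by running your chase for every pair $(x,y)$ in $\mathrm{Ker}(w)$ (using joint epiness of each $F_{xy}$), and then factors through the regular epi $w$; you fix one presenting pair $w=\mathrm{coeq}(x,y)$ and invoke the hypothesis only for that pair, a leaner use of the assumption which, however, presupposes that a single-pair presentation exists --- this is justified here because the paper's relations are sets of spans and $\mathcal{V}$ has coproducts, so a quotient of a relation on the single object $\sum_i A_i$ is the coequalizer of the copaired pair. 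Your reading of the misprint in the statement ($y\circ p=\theta_j\circ p_j$ rather than $\theta_i\circ p_j$) agrees with the paper's own proof, and your remark on the empty-index case is harmless.
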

\begin{proof}
Considering the diagram
\begin{equation}
\begin{tikzpicture}[xscale=1.5, every node/.style={scale=1},baseline=(current  bounding  box.center)]
% % % left hand side
%objects
\node (D) at (-2,0) {$D$};
\node (Ai) at (-1,0) {$A_i$};
\node (SA) at (0,0) {$\sum A_i$};
\node (B) at (1,1) {$B$};
\node (C) at (1,-1) {$C$};
%arrows
\path[->,font=\scriptsize,>=angle 90]
(D) edge node[above] {$x$} (Ai);
\path[->,font=\scriptsize,>=angle 90]
(Ai) edge node[above] {$\theta_i$} (SA);
\path[->,font=\scriptsize,>=angle 90]
(SA) edge node[below] {$w$} (B);
\path[->,font=\scriptsize,>=angle 90]
(SA) edge node[above] {$f$} (C);
\path[->,font=\scriptsize,>=angle 90]
(Ai) edge [bend left] node[above] {$w_i$} (B);
\path[->,font=\scriptsize,>=angle 90]
(Ai) edge [bend right] node[below] {$f_i$} (C);
\end{tikzpicture}
\end{equation}
it is easy to see that
\begin{equation}
\mathrm{Ker}(w)\subset \mathrm{Ker}(f)\;\implies\; \mathrm{Ker}\{w_i\}\subset \mathrm{Ker}\{f_i\}
\end{equation}
so given $f$ satisfying $\mathrm{Ker}(w)\subset \mathrm{Ker}(f)$, and using that $\{w_i\}$ is joint regular epi we get a unique factorization of $f$ through $w$, proving that $w$ is regular epi.

Conversely, given a regular epi $w$, and $f$ such that $\mathrm{Ker}\{w_i\}\subset \mathrm{Ker}\{f_i\}$, consider an arbitrary element of $\mathrm{Ker}(w)$,
$x,y:D\rightarrow\sum_i A_i$, that is $w\circ x=w\circ y$, and an arrow $p\in F_{xy}$.
Chasing diagrams gives
\begin{align}
w_i\circ p_i &=  w \circ \theta_i\circ p_i\\
			&=  w \circ x \circ p\\
			&=  w \circ y \circ p\\
			&=  w \circ \theta_j \circ p_j\\
			&=  w_j \circ p_j  
\end{align}
so $(p_i,p_j)\in \mathrm{Ker}(\{w_i\})$, and using the assumption for $f$, $(p_i,p_j)\in \mathrm{Ker}(\{f_i\})$. So we have
\begin{align}
f_i\circ p_i &=  f_j \circ p_j\\
f \circ \theta_i \circ p_i &=  f \circ \theta_j \circ p_j\\
f\circ x\circ p &=  f\circ y\circ p\,.
\end{align}
Using joint epiness of $F_{xy}$ we conclude that $(x,y)\in \mathrm{Ker}(f)$, and, because $w$ is regular epi, $f$ factors uniquely through it.
\end{proof}

\begin{remark}
As in the single epi case, any jointly regular epi family is automatically jointly strong epi.
The converse is true when $\mathcal{V}$ is familialy regular, a proof of a stronger statement is given in \cite{Street1984}.
\end{remark}

\begin{example}
In a preordered set $\mathcal{V}$ any family $\{A_i\xrightarrow{w_i}B\}_{i\in I}$ is jointly epi.
\end{example}

\begin{example}\label{ex:jRegPos}
In a poset $\mathcal{V}$ with arbitrary joins, a family $\{A_i\xrightarrow{w_i}B\}_{i\in I}$ is jointly extremal/strong/regular if and only if $B=\bigvee_i A_i$.
\end{example}

\bibliographystyle{hacm}
\bibliography{references.bib}

\begin{thebibliography}{10}

\bibitem{Bombelli1987}
{\sc Bombelli, L., Lee, J., Meyer, D., and Sorkin, R.~D.}
\newblock Space-time as a causal set.
\newblock {\em Phys. Rev. Lett. 59\/} (Aug 1987), 521--524.

\bibitem{Dowker2013}
{\sc Dowker, F.}
\newblock Introduction to causal sets and their phenomenology.
\newblock {\em General Relativity and Gravitation 45}, 9 (Sep 2013),
  1651--1667.

\bibitem{Garner2014}
{\sc Garner, R.}
\newblock Diagrammatic characterisation of enriched absolute colimits.
\newblock 2014, 1410.0071v1.

\bibitem{Kelly1969}
{\sc Kelly, G.~M.}
\newblock Monomorphisms, epimorphisms, and pull-backs.
\newblock {\em Journal of the Australian Mathematical Society 9}, 1-2 (Feb
  1969), 124.

\bibitem{Kelly1982}
{\sc Kelly, G.~M.}
\newblock {\em Basic concepts of enriched category theory}, vol.~64 of {\em
  London Mathematical Society Lecture Note Series}.
\newblock Cambridge University Press, Cambridge-New York, 1982.

\bibitem{Lawvere1973}
{\sc Lawvere, F.~W.}
\newblock Metric spaces, generalized logic, and closed categories.
\newblock {\em Rendiconti del Seminario Matematico e Fisico di Milano 43}, 1
  (1973), 135--166.

\bibitem{Lane1998}
{\sc Mac~Lane, S.}
\newblock {\em Categories for the Working Mathematician}.
\newblock Graduate Texts in Mathematics. Springer New York, 1998.

\bibitem{Pare1971}
{\sc Par{\'e}, R.}
\newblock On absolute colimits.
\newblock {\em Journal of Algebra 19}, 1 (1971), 80 -- 95.

\bibitem{Street1983}
{\sc Street, R.}
\newblock Absolute colimits in enriched categories.
\newblock {\em Cahiers Topologie G\'eom. Diff\'erentielle 24}, 4 (1983),
  377--379.

\bibitem{Street1984}
{\sc Street, R.}
\newblock The {Family} {Approach} to {Total} {Cocompleteness} and {Toposes}.
\newblock {\em Transactions of the American Mathematical Society 284}, 1
  (1984), 355--369.

\bibitem{Street2001}
{\sc Street, R.}
\newblock Powerful functors.
\newblock [Online; accessed 17-October-2017],
  \url{http://www.math.mq.edu.au/~street/Pow.fun.pdf}, 2001.

\bibitem{Street2005}
{\sc Street, R.}
\newblock Enriched categories and cohomology with author commentary.
\newblock {\em Reprints in Theory and Applications of Categories}, 14 (2005),
  1--18.

\end{thebibliography}

\end{document}